\newcommand{\removelatexerror}{\let\@latex@error\@gobble}
\newcommand{\xvbox}[2]{\makebox[#1][l]{#2}} 
\newcommand{\Pis}[1]{\Pi_{\mathrm{state}}(#1)}
\newcommand{\Pit}[1]{\Pi_{\mathrm{traj}}(#1)}
\newcommand{\Pic}[1]{\Pi_{\mathrm{cost}}(#1)}
\let\bbordermatrix\bordermatrix
\patchcmd{\bbordermatrix}{8.75}{4.75}{}{}
\newcommand{\real}{\mathbb{R}}
\newcommand{\realnonnegative}{{\mathbb{R}}_{\ge 0}}
\newcommand{\naturalnumbers}{\mathbb{N}}
\newcommand{\norm}[1]{\ensuremath{\| #1 \|}}
\newcommand{\until}[1]{[\, #1 \,]}
\newcommand{\map}[3]{#1:#2 \rightarrow #3}
\newcommand{\setdef}[2]{\{#1 \; | \; #2\}}
\newcommand{\myemphc}[1]{\emph{#1}} 
\newcommand{\setr}[1]{\{#1\}}
\newcommand{\abs}[1]{|#1|}
\newcommand{\xtraj}{\mathsf{x}}
\newcommand{\utraj}{\mathsf{u}}
\newcommand{\Unsafe}{\mathcal{U}\mathcal{I}}
\newcommand{\SSs}{\mathcal{S}}
\newcommand{\SSo}{\overline{\SSs}}
\newcommand{\drmpc}{\mathtt{Safe\_MPC}}
\newcommand{\st}{\operatorname{subject \text{$\, \,$} to}}
\renewcommand{\st}{\operatorname{s.t.}}
\newcommand{\Eb}{\mathbb{E}}
\newcommand{\Pb}{\mathbb{P}}
\newcommand{\Data}{\widehat{\mathcal{W}}}
\newcommand{\Qb}{\mathbb{Q}}
\newcommand{\HH}{\mathcal{H}}
\newcommand{\II}{\mathcal{I}}
\newcommand{\OO}{\mathcal{O}}
\newcommand{\PP}{\mathcal{P}}
\newcommand{\UU}{\mathcal{U}}
\newcommand{\WW}{\mathcal{W}}
\newcommand{\XX}{\mathcal{X}}
\newcommand{\CVaR}{\operatorname{CVaR}}
\newcommand{\VaR}{\operatorname{VaR}}
\newcommand{\Pbhat}{\widehat{\Pb}}
\newcommand{\dist}{\operatorname{dist}}
\newcommand{\what}{\widehat{w}}
\newcommand{\costgo}[2]{J_{(#1:#2)}}
\newcommand{\Pbcen}{\Pb^{\mathrm{cen}}}
\newcommand{\wcen}{w^{\mathrm{cen}}}
\newcommand{\Ncl}{N^{\mathrm{clu}}}
\newcommand{\idmap}{\mathrm{id}}
\newcommand{\Pbclu}{\Pb^{\mathrm{clu}}}
\newcommand{\XXinner}[1]{\XX^{\mathrm{inn},#1}}
\newcommand{\dmin}{d_{\mathrm{min}}}
\newcommand{\binner}[1]{b^{\mathrm{inn},#1}}
\newcommand{\Xappsafe}{\overline{\XX}}
\newcommand{\Xsafeiter}{\XX_{\mathrm{safe}}}
\newcommand{\XWass}{\XX_{\mathrm{Wass}}}
\newcommand{\XcluWass}{\XX_{\mathrm{cl-Wass}}}
\newcommand{\XinnWass}{\XX_{\mathrm{inn-Wass}}}
\newcommand{\Xb}{\mathbb{X}}
\newcommand{\oprocendsymbol}{\hbox{$\bullet$}}
\newcommand{\oprocend}{\relax\ifmmode\else\unskip\hfill\fi\oprocendsymbol}
\newcommand{\longthmtitle}[1]{\mbox{}\textup{\textsl{(#1):}}}
\newcommand{\ifinclude}[1]{}
\newcommand{\thickhline}{%
	\noalign {\ifnum 0=`}\fi \hrule height 1pt
	\futurelet \reserved@a \@xhline
}
\newcolumntype{"}{@{\hskip\tabcolsep\vrule width 1pt\hskip\tabcolsep}}
\newtheorem{theorem}{Theorem}[section]
\newtheorem{proposition}{Proposition}[section]
\newtheorem{lemma}[theorem]{Lemma}
\theoremstyle{definition}
\newtheorem{assumption}{Assumption}[section]
\newtheorem{remark}[theorem]{Remark}
\definecolor{new}{rgb}{0.55,0,0.55}
\title{Wasserstein distributionally robust risk-constrained iterative MPC for motion planning: computationally efficient approximations} 
\author{Alireza Zolanvari  \qquad Ashish Cherukuri \thanks{The authors are with the Engineering and Technology Institute Groningen, University of Groningen. Email: \texttt{\{a.zolanvari, a.k.cherukuri\}@rug.nl}. This work was partly supported with a scholarship from the Data Science and Systems Complexity (DSSC) Center, University of Groningen. }}
\begin{document}
	\maketitle
	\thispagestyle{empty}
	\pagestyle{empty}
	
	\begin{abstract}
		This paper considers a risk-constrained motion planning problem and aims to find the solution combining the concepts of iterative model predictive control (MPC) and data-driven distributionally robust (DR) risk-constrained optimization. In the iterative MPC, at each iteration, safe states visited and stored in the previous iterations are imposed as terminal constraints. Furthermore, samples collected during the iteration are used in the subsequent iterations to tune the ambiguity set of the DR constraints employed in the MPC. In this method, the MPC problem becomes computationally burdensome when the iteration number goes high. To overcome this challenge, the emphasis of this paper is to reduce the real-time computational effort using two approximations. First one involves clustering of data at the beginning of each iteration and modifying the ambiguity set for the MPC scheme so that safety guarantees still holds. The second approximation considers determining DR-safe regions at the start of iteration and constraining the state in the MPC scheme to such safe sets. We analyze the computational tractability of these approximations and present a simulation example that considers path planning in the presence of randomly moving obstacle. 
	\end{abstract}
	
	\section{Introduction}\label{sec:intro}
	Real-world autonomous mobile robots usually navigate in unknown or partially known environments. Thus, safety is one of the most significant priorities in motion planning problems. There are different optimal control techniques for providing safety guarantees in such situations, among which robust and probabilistic approaches are popular. Alternatively, risk-aware control design has gathered attention recently due to its ability to tune the conservativeness and the safety level of the designed controller between robust and probabilistic approaches. Moreover, the optimization problems leading to risk-aware decisions are often convex and tractable for a large class of risk measures. One challenge persists for all the above-listed methods, that of not knowing the distribution of the uncertainty fully and only having access to a small number of samples of it. In such scenarios, distributionally robust (DR) decisions provide an elegant way of tuning safety and cost-efficiency. Motivated by this, this paper considers a risk-constrained optimal control problem for a motion planning problem and improves on the iterative DR MPC scheme formulated in~\cite{AZ-AC:22-ecc}. The method presented in~\cite{AZ-AC:22-ecc} uses distributional robustness to find safe trajectories, even when the samples of the uncertainty are few in number. However, the method suffers from heavy computational burden when considering the realistic scenario of continuous distributions. To this end, we provide ``low-complexity'' approximations of the DR risk constraint considered in the MPC routine without compromising on safety. This process brings the iterative DR MPC method closer to being real-time implementable.

	\subsubsection*{Literature review}
	Distributional robustness in risk-constrained model predictive control is explored extensively in~\cite{PC-PP:2022, CM-SL:21, AH-IY:2021-tac, AN-ARH:2022}. While most of them focus on out-of-sample performance guarantees, the proposed schemes become computationally burdensome as the number of available samples grows. To overcome this challenge, two different strategies have been suggested in the distributionally robust (DR) optimization literature. On the one hand,~\cite{DL-SM:2020, FF-PJG:2021, IW-CB-BV-BS:2022} reduce the number of samples by data compression and provide performance guarantees for optimizers of the DR problems formulated by the compressed data. The idea of complexity reduction using data compression is known as scenario reduction and was introduced in~\cite{JD-NG-WR:2003}. On the other hand, constraint tightening approaches, proposed in~\cite{CM-SL:2020, AD-MA-JWB:2022-DRMPC, MF-JL:2022}, tackle the computational challenge by inner-approximating the distributionally robust region and limiting the feasible states to the DR-safe region. None of the above-mentioned studies explore these techniques for iterative approaches. Iterative approaches are well-suited to many real-world problems as they can overcome the challenge of insufficient data points through progressive exploration of the environment and repeated execution of the task. With this motivation, we explore both scenario reduction and constraint-tightening types of approximations of the iterative algorithm from~\cite{AZ-AC:22-ecc}. Our emphasis is on reducing the computational effort and making our method real-time implementable. 
	\subsubsection*{Setup and Contributions} 
	We start in Section~\ref{sec:problem} with the explanation of the risk-constrained optimal control problem that is tailored for the motion planning problem. Here the risk constraint encodes collision avoidance in presence of a random obstacle. In Section~\ref{sec:algo}, we present the iterative DR MPC method from~\cite{AZ-AC:22-ecc} as a solution strategy and explain the general framework when considering Wasserstein ambiguity sets. We present in Section~\ref{sec:approx} the finite-dimensional reformulation of the DR risk constraints when considering Wasserstein ambiguity sets. We discuss the computational challenge of imposing these reformulated constraints, which lead to our main contributions that entail providing two computationally efficient approximations of the DR risk constraints. The first one considers Wasserstein ambiguity 
	sets with a larger radius and the center as the distribution generated using clustered data. The second one generates an inner estimate of the set of points that satisfy DR risk constraints. The inner estimate has a simple form of being the union of half-spaces. We show that both our approximations are safe in the sense that they satisfy the originally imposed risk constraint up to a pre-specified probability. We identify conditions under which the iterative DR MPC with the defined approximations are recursively feasible and asymptotically convergent. Lastly, we demonstrate the advantage of our method for a risk-averse path planning task.

	\section{Preliminaries}\label{sec:prelims}
	Here we collect notation and mathematical background.  
	\subsubsection{Notation}\label{subsec:notation}
	Let $\real$, $\realnonnegative$, and $\naturalnumbers_{\ge 1}$ denote the set of real, nonnegative real, and natural numbers excluding zero, respectively. 
	Let $\norm{\cdot}$ denote the $2$-norm. For $N \in \naturalnumbers_{\ge 1}$, we denote $[N] := \{1,\dots,N\}$ and $[N]_0 := \{0,1,\dots,N\}$. Given $x \in \real$, we let $[x]_+ = \max(x,0)$. 
	
	\subsubsection{Conditional Value-at-Risk}\label{subsec:cvar}
	We review notions on conditional value-at-risk (CVaR) from~\cite{AS-DD-AR:14}. Given a real-valued random variable $Z$ with probability distribution $\Pb$ and $\beta \in (0,1)$, the \myemphc{value-at-risk} of $Z$ at level $\beta$, denoted $\VaR_\beta^{\Pb}[Z]$, is the left-side $(1-\beta)$-quantile of $Z$. Formally,
	\begin{align*}
		\VaR_\beta^\Pb [Z]  	& = \inf \setdef{\zeta}{\Pb(Z \le \zeta) \ge 1-\beta}.
	\end{align*}
	The \myemphc{conditional value-at-risk (CVaR)} of $Z$ at level $\beta$, denoted $\CVaR_\beta^\Pb [Z]$, is given as 
	\begin{align}\label{eq:cvar-def-alt}
		\CVaR_\beta^\Pb [Z] = \inf_{t \in \real} \Bigl\{ t + \beta^{-1} \Eb^\Pb[Z - t]_+ \Bigr\},
	\end{align}
	where $\Eb^\Pb[\,\cdot\,]$ denotes expectation under $\Pb$. Under continuity of the CDF of $Z$, we have 
	$\CVaR_\beta^\Pb [Z] := \Eb^\Pb[Z \ge \VaR_\beta^\Pb [Z]]$.
	The parameter $\beta$ characterizes the risk-averseness. When $\beta$ is close to unity, the decision-maker is risk-neutral, whereas, $\beta$ close to the origin implies high risk-averseness. 
	\subsubsection{Wasserstein metric}\label{sec:Wass}
	Given a compact set $\WW \subset \real^{n_w}$, let $\PP(\WW)$ be the set of Borel probability
	measures supported on $\WW$. Following~\cite{PME-DK:18}, the $1$-Wasserstein metric between measures $\mu, \nu \in \PP(\WW)$ is
	\begin{equation}\label{eq:def_wasserstein}
		d_W(\mu,\nu) := \min_{\gamma \in \HH(\mu,\nu)}
		\left\{\int_{\WW \times \WW} \norm{w_1-w_2} \gamma(dw_1,dw_2) \right\},
	\end{equation}
	where $\HH(\mu,\nu)$ is the set of all distributions on
	$\WW \times \WW$ with marginals $\mu$ and $\nu$. %
	
	\section{Problem Statement} \label{sec:problem}
	
	Consider the following discrete-time system:
	\begin{equation}\label{sys}
		x_{t+1}  = f(x_t,u_t), 
\end{equation}
where $\map{f}{\real^{n_x} \times \real^{n_u}}{\real^{n_x}}$ represents the dynamics and $x_t\in\real^{n_x}$ and $u_t\in\real^{n_u}$ are the state and control input at time $t$, respectively. The system state and control input are required to satisfy the following deterministic constraints:
\begin{equation}
	x_t\in \XX , \quad u_t\in \UU , \quad  \forall t\geq 0,
\end{equation}
where $\XX \subset \real^{n_x}$ and $\UU \subset \real^{n_u}$ are assumed to be compact convex sets. We assume without loss of generality that $0 \in \UU$. Our objective in the motion planning problem is to drive the system from an initial state to a target equilibrium point $x_F \in \XX$ while ensuring a suitable safety requirement. We encode this task as the following infinite-horizon risk-constrained optimal control problem:
\begin{subequations}\label{eq:IHOCP}
	\begin{align}
		\min\quad  &\sum_{t=0}^{\infty}r(x_{t}, u_{t})\label{eq:IHOCP-obj} %
		\\
		\text{s.t.}  \quad & x_{t+1} = f(x_t,u_t), \quad \forall t\geq 0,\label{eq:IHOCP-a}
		\\
		\quad & x_t \in \XX , u_t\in \UU , \quad  \forall t\geq 0,\label{eq:IHOCP-c}
		\\
		\quad & x_0 = x_S,\label{eq:IHOCP-b}
		\\
		\quad & \CVaR_{\beta}^\mathbb{P}\left[g(x_t,w)\right] \leq 0, \quad \forall t \geq 0. \label{eq:IHOCP-d}
	\end{align}
\end{subequations}
Here, $x_S \in \XX$ is the initial state and the stage-cost $\map{r}{\XX \times \UU}{\realnonnegative}$ is assumed to be continuous satisfying $r(x,u) = 0$ if and only if $(x,u) = (x_F,0)$. Further, the constraint~\eqref{eq:IHOCP-d} represents the safety guarantee, where $\CVaR$ stands for the conditional value-at-risk (see Section~\ref{subsec:cvar} for details), $w$ is a random variable with distribution $\Pb$ supported on the polyhedral convex compact set $\WW := \setdef{w\in\real^{n_w}}{Hw\le h}$, the parameter $\beta > 0$ is the risk averseness coefficient, and the continuous function $\map{g}{\XX \times \WW}{\real}$  is referred to as the constraint function. Next, we will make $g$ more precise for the case of avoiding polyhedral obstacles. 

Let $C \in \real^{n_p \times n_x}$ be a matrix that gives the components corresponding to the position of a state $x \in \real^{n_x}$ as $Cx$. Consequently, the feasible region in the position coordinates is denoted by $C \XX$. We assume the presence of one uncertain obstacle in $C \XX$. At the unperturbed position, the space occupied by the obstacle is defined by
	$\mathcal{O} : = \setdef{p \in \real^{n_p}}{Ap \le b}$,
where $A \in \real^{M \times n_p}$, $b \in \real^M$, and $\OO$ is assumed to be compact.  As the position of the obstacle is uncertain we assume that given a realization $w \in \WW \subset \real^{n_p}$ of the random variable, the occupancy of the obstacle is given as
\begin{align*}
	\OO_w := \OO + w = \setdef{p+w}{Ap \le b}.
\end{align*}
We assume that $\OO_w \subset C \XX$ for all realizations $w \in \WW$. %
We say that the state $x$ is safe in terms of collision with $\OO_w$ if the following constraint is met
\begin{align}\label{eq:g-func}
	g(x,w) := d_{\mathrm{min}} - \dist(Cx,\OO_w) \le 0,
\end{align}
where $\dist(Cx,\OO_w)$ stands for the Euclidean distance of the point $Cx$ from the set $\OO_w$. Specifically,
\begin{align*}
	\dist(Cx,\OO_w) = \min_{y \in \OO_w} \norm{Cx - y}.
\end{align*}
Note that using the structure of the $\OO_w$, the distance can be equivalently written as
\begin{align*}
	\dist(Cx, \OO_w) := \max_{m\in[M]}\left[A_m(Cx-w)-b_m\right]_+,
\end{align*}
where $A_m$ and $b_m$ are the $m$-th row of $A$ and $m$-th component of $b$, respectively. For the sake of simplicity and without loss of generality, we assume that $\norm{A_m}=1$ for all $m$. 
Throughout the paper, we assume $g$ of the above form. In lieu of the above definition, we call $x$ to be $\Pb$-risk safe if
\begin{align*}
	\CVaR_\beta^\Pb[g(x,w)] = \CVaR_\beta^\Pb[ d_{\mathrm{min}} - \dist(Cx,\OO_w)]\le 0.
\end{align*}

We approach solving the infinite-horizon problem~\eqref{eq:IHOCP} using approximations of the iterative MPC scheme proposed in~\cite{AZ-AC:22-ecc}. The method given in~\cite{AZ-AC:22-ecc} combines ideas from iterative learning MPC~\cite{UR-FB:17-tac} and distributionally robust (DR) risk-constrained optimization problems~\cite{AC-ARH:2020}. However, in doing so, the optimization problem at the core of the MPC problem becomes computationally cumbersome, especially when the iteration count goes high, the underlying distribution is continuous, and the ambiguity set is defined using the Wasserstein metric. Specifically, in this case, the optimization problem has mixed-integer decision variables, where the number of constraints and the number of decision variables grow linearly with the number of samples. The objective of this paper is to ease this computational burden by rendering the size of the optimization problem solved in the MPC independent of the number of samples while maintaining safety guarantees. %

\section{DR-based safety-constrained iterative MPC}\label{sec:algo}
Here we present our general iterative framework that is borrowed from~\cite{AZ-AC:22-ecc}. We present this scheme in a form where the safety constraints embedded in the MPC problem are abstract. In the following section, we specify three ways of generating these safety constraints, all of which form different subsets of the set of points that satisfy the risk constraint~\eqref{eq:IHOCP-d} in a distributionally robust manner. We then discuss their computational and statistical properties.

\subsection{Building blocks of the iterative scheme}
An iteration involves generating a trajectory of the system~\eqref{sys}. In particular, the trajectory of the $j$-th iteration is denoted as 
\begin{equation}\label{cl-traj}
	\begin{split}
		\xtraj^j& :=[x_0^j, x_1^j, \dots, x_t^j, \dots, x_{T_j}^j],\\
		\utraj^j& :=[u_0^j, u_1^j, \dots, u_t^j, \dots, u_{T_j - 1}^j],
	\end{split}
\end{equation}
where $x_t^j$ and $u_t^j$ are the system state and the control input at time $t$, respectively. We assume that $x_0^j = x_S$ for all $j \geq 1$ and that each trajectory $j$ consists of a finite number of time steps $T_j \in \naturalnumbers_{\ge1}$. We also refer to this quantity as the length of the trajectory. 

Recall that the distribution $\Pb$ of the random variable $w$ is supported on a polyhedral convex compact set $\WW := \setdef{w\in\real^{n_w}}{Hw\le h}$. At the start of iteration $j$, we assume that we have $N_{j-1} \in \naturalnumbers_{\ge 1}$ number of samples of the uncertainty available to us. We denote this dataset as $\Data^{j-1} := \setr{\what_1, \dots, \what_{N_{j-1}}}$. We assume that the system gathers $M_j \in \naturalnumbers_{\ge 1}$ number of samples of the uncertainty during the $j$-th iteration. Thus, $N_j = N_{j-1} + M_j$. If one sample is gathered at each time step of the iteration, then $M_j = T_j$. The data could either be drawn from the distribution $\Pb$ in an i.i.d. manner or could be obtained from other distributions that are close to $\Pb$ in some appropriate metric. In both cases, our approach of enforcing risk constraint~\eqref{eq:IHOCP-d} for all distributions in an appropriately defined set helps ensure the system's safety, even when the number of available samples is low. To this end, we impose an assumption on the data-gathering process. First, we define the empirical distribution corresponding to the dataset $\Data^{j-1}$ as 
\begin{align*}
	\Pbhat_{N_{j-1}} : = \textstyle \frac{1}{N_{j-1}} \sum_{i=1}^{N_{j-1}} \delta_{\what_i},
\end{align*}
where $\delta_{\what_i}$ is the dirac-delta distribution placed at the point $\what_i$. Given this distribution and a radius $\theta_{j-1} > 0$, we construct the Wasserstein ambiguity set as 
\begin{align}
	\mathcal{B}(\Pbhat_{N_{j \! - \! 1}},\theta_{j \! - \! 1}) \! := \! \setdef{\Qb \! \in \!  \PP(\WW) \! }{ \! d_W(\Qb,\Pbhat_{N_{j-1}}) \! \le \!  \theta_{j-1}}, \label{eq:B-Wass}
\end{align} 
where $d_W$ is the Wasserstein metric (see Section~\ref{sec:Wass} for the definition) and $\PP(\WW)$ are all distributions supported on $\WW$. We then assume:
\begin{assumption}\longthmtitle{Confidence guarantee of $\Pb$ contained in the ambiguity set}\label{as:conf}
	For any iteration $j$, we are given a radius $\theta_{j-1} > 0$ such that for any dataset $\Data^{j-1}$ gathered by the end of the $(j-1)$-th iteration, we have
	\begin{align}\label{eq:conf-P-in-ambiguity}
		\operatorname{Prob}[\Pb \in \mathcal{B}(\Pbhat_{N_{j-1}},\theta_{j-1})] \ge \zeta,
	\end{align}
	where $\zeta \in (0,1)$ is a pre-specified confidence level. \oprocend
\end{assumption} 
Note that if samples are drawn i.i.d., then $\operatorname{Prob} = \Pb^{N_{j-1}}$ in the above definition, where $\Pb^{N_{j-1}}$ represents the $N_{j-1}$-fold product of the underlying distribution $\Pb$. Then, one can derive the relationship between  $\zeta$,  $\theta_{j-1}$, and  $N_{j-1}$ such that~\eqref{eq:conf-P-in-ambiguity} holds, see e.g.,~\cite[Theorem 3.4]{PME-DK:18}. Bearing the above definition in mind, a point $x' \in \XX$ satisfies the risk constraint~\eqref{eq:IHOCP-d} with probability $\zeta$ if we ensure that 
\begin{align}\label{eq:XWass}
	x' \! \in & \XX^{j-1}_{\mathrm{DR}} \! :=  \! \setdef{x \in \XX}{\! \! \sup_{\Qb \in \mathcal{B}(\Pbhat_{N_{j \! - \! 1}} \!,\theta_{j \! - \! 1})} \! \! \CVaR_{\beta}^\Qb [g(x,w)] \! \le \! 0}.
\end{align} 
The aim of our algorithm is to seek such trajectories, those that satisfy the risk constraint with probability $\zeta$.  One way is to impose~\eqref{eq:XWass}  in our MPC routine. However, the resulting optimization comes with a significant computational burden. Thus, in Section~\ref{sec:approx} we define approximations of the set given in~\eqref{eq:XWass} with the purpose of balancing computational ease and optimality while ensuring safety throughout.

We now describe other key elements of the iterative learning MPC. Given the trajectory $(\xtraj^j,\utraj^j)$ generated in iteration $j$, the cost-to-go at time $t$ is denoted as:
\begin{align}\label{eq:to-go}
	\costgo{t}{\infty}^j :=  \textstyle \sum_{k=t}^\infty r(x_k^j, u_k^j).
\end{align}
Thus, the cost of the $j$-th iteration is $\costgo{0}{\infty}^j$. Since we assume that the $j$-th trajectory has a finite length $T_j$, for every time step $t \ge T_j$, we assume that the system remains at $x_F$ and the control input is zero. Thus, the infinite sum in~\eqref{eq:to-go} is well-defined as $r(x_F,0) = 0$. In our iterative method, information about the system and the environment grows as iterations progress. The latter is owing to the fact that more data regarding the uncertainty becomes available in each iteration. On the other hand, the former is acquired by means of exploring the state space incrementally. To this end, our method maintains a set of safe states  (along with the minimum cost that it takes to go to the target from them) that were explored in the previous iterations and uses them in an iteration as terminal constraints in the MPC scheme (as proposed in~\cite{UR-FB:17-tac}). Specifically, the sampled safe set obtained at the end of iteration $j$ and to be used in iteration $j+1$, denoted $\SSs^{j} \subseteq \until{j} \times \XX \times \realnonnegative$, is defined recursively as
\begin{align}\label{eq:SS-update-gen}
	\SSs^{j} = \mathbb{S}^j \Bigl(\SSs^{j-1} \cup \setr{(j,x_t^{j},\costgo{t}{\infty}^{j})}_{t=1}^{T_j} \Bigr),
\end{align}
where the set $\setr{(j,x_t^{j},\costgo{t}{\infty}^{j})}_{t=1}^{T_j}$ collects the set of states visited in iteration $j$, along with the associated cost-to-go. The counter $j$ is maintained in this set to identify the iteration to which a state with a particular cost-to-go is associated with. The set $\SSs^{j-1}$ is the set used in iteration $j$. The map $\mathbb{S}^j$ only keeps the states that are safe with respect to the new dataset $\Data^j$. The exact map $\mathbb{S}^j$ is explained in our algorithm.

For ease of exposition, we define maps $\Pit{\cdot}$, $\Pis{\cdot}$, and $\Pic{\cdot}$, such that, given a safe set $\SSs$,  $\Pit{\SSs}$, $\Pis{\SSs}$, and $\Pic{\SSs}$ return the set of all trajectory indices, states, and cost-to-go values that appear in $\SSs$, respectively.  The following assumption is required to initialize our iterative procedure with a nonempty sampled safe set. 
\begin{assumption}\longthmtitle{Initialization with robust trajectory}\label{assump:safeset}
	The first iteration starts with the sampled safe set $\SSs^0$ containing a  finite-length robustly safe trajectory $\xtraj^0$ that starts from $x_S$ and reaches $x_F$. This means that the trajectory $\xtraj^0$ in $\SSs^0$ robustly satisfies all constraints of problem~\eqref{eq:IHOCP}, that is, %
	$x\in\XX,\,g(x, w) \leq 0$ for all $w \in \WW$,
	and all $x \in \Pis{\SSs^0}$.\oprocend
\end{assumption}

Note that for every state stored in the sampled safe set $\SSs^{j}$, we store the cost-to-go from it. However, it is possible that a state appears in multiple trajectories and given that $\SSs^{j}$ contains several trajectories, it is beneficial to maintain a minimum cost-to-go from every state in it. That being the case, we define the map
\begin{align}\label{eq:Qj}
	Q^j(x):=\begin{cases}
		\min\limits_{J\in F^j(x)}J, &\quad x\in\Pis{\SSs^j},
		\\
		+\infty, &\quad x\notin\Pis{\SSs^j},
	\end{cases}
\end{align}
where
\begin{align}\label{def:Fj}
	F^j(x) = \setdef{\costgo{t}{\infty}^i }{ \Pis{\left\{(i, x^i_t, \costgo{t}{\infty}^i)\right\}} = \{x\},&\nonumber
		\\ 
		(i, x^i_t, \costgo{t}{\infty}^i) \in \SSs^j}&.
\end{align}
Here, the set $F^j(x)$ contains all cost-to-go values associated with the state $x \in \Pis{\SSs^j}$ and consequently, the function $Q^j$ determines the minimum among these. 

Given the above-described elements, we now present the optimization problem that lies at the core of our method. For generality, we write the problem for generic current state $x$, sampled safe set $\SSo$, and safety constraint $x \in \Xappsafe \subset \XX$. Let $K \in \naturalnumbers_{\ge 1}$ be the length of the horizon and consider
\begin{equation}\label{eq:DR-RLMPC:main}
	\mathcal{J}_{(\SSo, \Xappsafe)} (x) := \begin{cases}
		\min & \, \,  \sum_{k=0}^{K-1}r(x_{k}, u_{k}) +\overline{Q}(x_{K}) 
		\\
		\st & \, \, x_{k+1} = f(x_{k},u_{k}),   \forall k\in[K-1]_0,
		\\
		& \, \, x_{k} \in \Xappsafe, u_{k}\in\UU,   \forall k\in[K-1]_0,
		\\
		& \, \, x_{0}=x,
		\\
		& \, \, x_{K}\in\Pis{\overline{\SSs}}, 
	\end{cases}
\end{equation}
where $\map{\overline{Q}}{\XX}{\real}$ gives the minimum cost-to-go for all states in $\SSo$ and is calculated in a similar manner as in~\eqref{eq:Qj}. The decision variables in the above problem are $(x_0, x_1, \dots, x_K)$ and $(u_0, u_1, \dots, u_{K-1})$. The set $\SSo$ defines the terminal constraint $x_K \in \Pis{\SSo}$. Finally, the constraint $x_k \in \Xappsafe$ encodes the safety guarantee. 
For iteration $j$ and time step $t$, the MPC scheme solves the finite-horizon problem~\eqref{eq:DR-RLMPC:main} with $x = x_t^j$, $\SSo = \SSs^{j-1}$, and $\overline{Q} = Q^{j-1}$, while $\Xappsafe$ takes one of the following values: $\XWass^{j-1}$, $\XcluWass^{j-1}$, or $\XinnWass^{j-1}$. The set $\XWass^{j-1}$ is defined by the reformulation of the Wasserstein DR risk-constraint, the set $\XinnWass^{j-1}$ is an inner approximation of $\XWass^{j-1}$, and finally,  $\XcluWass^{j-1}$ is defined in a similar way as $\XWass^{j-1}$ but with clustered data. We explain these further in Section~\ref{sec:approx}.

\subsection{Algorithm describing the iterative scheme}
Here we put together the building blocks of our method that were outlined above. The resulting scheme is given in Algorithm~\ref{ag:DR-iteration} and is similar to the algorithm from our previous work~\cite{AZ-AC:22-ecc}. 
Each iteration $j \ge 1$ of Algorithm~\ref{ag:DR-iteration} starts with a sampled safe set $\SSs^{j-1}$ and a safe set $\Xsafeiter^{j-1}$, where the latter is determined using the dataset $\Data^{j-1}$ and the radius $\theta_{j-1}$. We represent this association via the map $\mathbb{X}$ (Line~\ref{ln:ambiguity}). As explained above, the set $\Xsafeiter^{j-1}$ takes value as $\XWass^{j-1}$, $\XcluWass^{j-1}$, or $\XinnWass^{j-1}$. The precise definition of these sets are given in Section~\ref{sec:approx}. Given $\SSs^{j-1}$ and $\Xsafeiter^{j-1}$, the first step of the iteration (Line~\ref{ln:drmpc}) involves generating a trajectory $(\xtraj^j,\utraj^j)$ using $\mathtt{Safe\_MPC}$ routine (described in Algorithm~\ref{ag:DR_MPC}). The dataset is updated to $\Data^j$ and the set $\Xsafeiter^j$ is computed for the next iteration using $\Data^j$ and the radius $\theta_j$ in Line~\ref{ln:ambiguity}. The trajectory $\xtraj^j$ along with its associated cost-to-go is appended to the sampled safe set in Line~\ref{ln:safe}. The set $\Unsafe^j$ collects in Line~\ref{ln:uns-traj} all previous trajectories for which one of the states is not safe with respect to the newly determined set $\Xsafeiter^j$. In Line~\ref{ln:M}, the set of trajectories in $\II^{j-1} \cup \{j\}$ that are not in $\Unsafe^j$ are collected in the set $\II^j$. Consequently, the states visited in trajectories in $\II^j$ are stored in $\SSs^j$ in Line~\ref{ln:ss-update} and their minimum cost-to-go is computed in Line~\ref{ln:Q}. Collectively, Lines~\ref{ln:uns-traj} to~\ref{ln:ss-update} represent the map $\mathbb{S}$ defined in~\eqref{eq:SS-update-gen}. 

\begin{algorithm}[htb]
	\SetAlgoLined
	\DontPrintSemicolon
	\SetKwInOut{Input}{Input}
	\SetKwInOut{Output}{Output}
	\SetKwInOut{init}{Initialize}
	\SetKwInOut{giv}{Data}
	\SetKwInOut{params}{Parameter}
	\Input{%
		\xvbox{2mm}{$\SSs^0$}\quad--$\;$Initial sampled safe set \\
		\xvbox{2mm}{$\Data^0$}\quad--$\;$Initial set of samples \\
		\xvbox{2mm}{$\II^0$}\quad--$\;$Index of trajectory in $\SSs^0$
		\\
	}
	\init{
		$j \gets 1$, $\Xsafeiter^{0}$, $\Unsafe^{0}\gets\emptyset$}
	\While{$j > 0$ }
	{%
		Set $(\xtraj^{j}, \utraj^{j})\gets \mathtt{Safe\_MPC}(\SSs^{j-1}, \Xsafeiter^{j-1})$; $T^j\gets\mathtt{length}(\xtraj^{j})$; $\Data^{j} \gets \Data^{j-1} \cup \{\what_i\}_{i=1}^{M_j}$ \label{ln:drmpc} \;
		Set $\Xsafeiter^j \gets \Xb(\Data^j,\theta_j)$ \label{ln:ambiguity}\; 
		Set $\SSs^{j-1}\gets\SSs^{j-1} \cup \{(j, x_t^j, J^j_{(t:\infty)})\}_{t=1}^{T_j}$ \label{ln:safe}\;
		Set $\Unsafe^j \gets \{ i \in (\II^{j-1}\cup \{j\} ) \, |(i, x, J) \in \SSs^{j-1} \text{ and } x \not \in \Xsafeiter^j\}$ \label{ln:uns-traj} 
		\;
		Set $\II^{j} \gets (\II^{j-1}\cup \{j\} ) \setminus \Unsafe^{j}$ \label{ln:M} \;
		Set $\SSs^{j} \gets \setdef{(i, x, J) \in \SSs^{j-1}}{i \in \II^{j}}$ \label{ln:ss-update}\;
		Compute $Q^j(x)$ for all $x\in\Pis{\SSs^j}$ using~\eqref{eq:Qj}\label{ln:Q}\;
		Set $j\gets j+1$
	}
	
	\caption{Iterative MPC with DR-based safety constraints} %
\label{ag:DR-iteration} 
\end{algorithm}

Algorithm~\ref{ag:DR-iteration} calls the $\drmpc$ routine in each iteration. This procedure is given in Algorithm~\ref{ag:DR_MPC} where at each time step $t$, the finite-horizon problem~\eqref{eq:DR-RLMPC:main} is solved with $x = x_t$. The optimal solution is denoted as 

\begin{equation}\label{eq:optSol}
	\begin{split}
		x_{\mathrm{vec},t}^{*} &= [x_{t|t}^{*}, \dots , x_{t+K|t}^{*}], 
		\\
		u_{\mathrm{vec},t}^{*} &= [u_{t|t}^{*}, \dots , u_{t+K-1|t}^{*}],
	\end{split}
\end{equation}
where $x_{t+k|t}$ is the prediction made at time $t$ regarding the state at time $t+k$. The control at time $t$ is set as the first element $u_{t|t}^{*}$ (Line~\ref{ln:control}) and it is appended to the trajectory $\utraj$. The state is updated and added to $\xtraj$ in Line~\ref{ln:state}. The procedure moves to the next time step with the updated state as $x_{t+1}$.

\begin{algorithm}[htb]
	\SetAlgoLined
	\DontPrintSemicolon
	\SetKwInOut{Input}{Input}
	\SetKwInOut{Output}{Output}
	\SetKwInOut{init}{Initialize}
	\SetKwInOut{giv}{Data}
	\SetKwInOut{params}{Parameter}
	\SetKwProg{Fn}{Function}{:}{}
	\SetKwFunction{FMain}{$\mathtt{Safe\_MPC}$}
	\Fn{\FMain{$\SSo, \overline{\XX}$}}{
		\init{%
			$t\gets0$; $x_0\gets x_S$; $\xtraj \gets[x_0]$, $\utraj \gets[\,\,]$
		}
		Set $\overline{Q}$ as minimum cost-to-go in $\SSo$ (use~\eqref{eq:Qj})\;
		\While{$x_t\neq x_F$}{
			Solve~\eqref{eq:DR-RLMPC:main} with $x=x_t$ and obtain optimal solutions $x_{\mathrm{vec},t}^{*}$ and $u_{\mathrm{vec},t}^{*}$\; \label{step:finite-horizon}
			Set $u_t\gets u^{*}_{t|t}$; $\utraj \gets[u, u_t]$ \label{ln:control}\;
			Set $x_{t+1} \! \gets  \! f(x_t,u_t) $; $\xtraj \! \gets \! [x, x_{t+1}]$; $t \! \gets  \! t+1$ \label{ln:state}\;
		}
		\textbf{return} $(\xtraj, \utraj)$
	}
	\textbf{end}
	\caption{Safe MPC function}
	\label{ag:DR_MPC}
\end{algorithm}

\begin{remark}\longthmtitle{Comparison with~\cite{AZ-AC:22-ecc}}
	We note that the general structure of our algorithm is similar to that in~\cite{AZ-AC:22-ecc}. However, there are two key differences that highlight the contribution of this paper. First, the algorithm in~\cite{AZ-AC:22-ecc} is written in a general form for any type of ambiguity set generated using data. There is no emphasis on computational tractability and the simulations only focus on discrete distributions. On the other hand, we here specify the Wasserstein ambiguity set and focus more on the computational issues. Second, our algorithm and the approximations presented in Section~\ref{sec:approx} are tailored for the motion planning problem; they exploit the structure of the obstacle avoidance constraint to derive fast MPC routines that can be implemented in real-time.
	\oprocend
\end{remark}

\section{Approximations of DR-based safety constraint}\label{sec:approx}

In this section, we explain the three safety sets that are used as constraints in the MPC routine of Algorithm~\ref{ag:DR-iteration}. All these sets lead to trajectories that satisfy the risk constraint~\eqref{eq:IHOCP-d} with probability $\zeta$. Before proceeding further we first provide a finite-dimensional representation of an upper bound of the worst-case CVaR over a Wasserstein ambiguity set. This will depict the computational issues of solving the finite horizon problem~\eqref{eq:DR-RLMPC:main} with two DR-based safety sets $\XWass^{j-1}$ and $\XcluWass^{j-1}$. Consequently, it will also motivate the design of the third safety set $\XinnWass^{j-1}$.

\newcommand{\BB}{\mathcal{B}}
\newcommand{\thetacen}{\theta_{\mathrm{cen}}}

\begin{lemma}\longthmtitle{Reformulation of the worst-case risk}\label{le:reform}
	Consider an atomic distribution 
	\begin{align*}
		\Pbcen : = \sum_{\ell = 1}^L p_\ell \delta_{\wcen_\ell},
	\end{align*}
	where $\wcen_\ell \in \WW$ for all $\ell \in [L]$ and $p_\ell \in (0,1)$ for all $\ell \in [L]$ with $\sum_{\ell = 1}^L p_\ell = 1$. Consider the ambiguity set
	\begin{align*}
		\BB(\Pbcen,\thetacen):= \setdef{\Qb \in \PP(\WW)}{d_W(\Qb,\Pbcen) \le \thetacen},
	\end{align*}
	where $d_W$ is the Wasserstein metric~\eqref{eq:def_wasserstein} and $\PP(\WW)$ are all distributions supported on $\WW$. Then, for the function $g$ given in~\eqref{eq:g-func}, we have 
	\begin{align}
		& \sup_{\Qb \in \BB(\Pbcen,\thetacen)} \CVaR_\beta^\Qb [g(x,w)]  \label{eq:worst-case}
		\\
		& \quad \le \begin{cases} \nonumber\inf  & \lambda \theta_{\mathrm{cen}} - \beta\eta + \sum_{\ell = 1}^L s_\ell
			\\
			\st   
			& p_\ell^{-1}s_\ell - \eta \ge \dmin - \big([ACx-b]^\top\nu
			\\
			& \quad  \; -[(A^\top\!\nu\!-\! H^\top\!\gamma_\ell)^\top\! \wcen_\ell\!+\!\gamma_\ell^\top\! h]\big),  \forall \ell\in [L],
			\\
			& \norm{A^\top\nu\!-\! H^\top\gamma_\ell}\!\le\!\lambda, \, \norm{A^\top\nu}\!\le\!1, \quad \forall \ell\in[L],
			\\
			& \nu, \gamma_\ell\in\realnonnegative^M, \;\lambda, s_\ell\in \realnonnegative, \;\eta \in \real,\; \forall \ell\in[L].  %
		\end{cases}
	\end{align}
\end{lemma}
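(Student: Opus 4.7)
The plan is to chain three layers of duality, eliminating in turn the worst-case over $\Qb$, the positive part inside the expectation, and the residual maximization over $w\in\WW$. First I invoke the Rockafellar--Uryasev representation from Section~\ref{subsec:cvar}, $\CVaR_\beta^\Qb[g(x,w)] = \inf_{t\in\real}\{t + \beta^{-1}\Eb^\Qb[g(x,w)-t]_+\}$, and the weak-duality inequality $\sup_\Qb\inf_t \le \inf_t\sup_\Qb$ to obtain
\begin{align*}
\sup_{\Qb \in \BB(\Pbcen,\thetacen)}\CVaR_\beta^\Qb[g(x,w)] \le \inf_{t\in\real}\Bigl\{t + \beta^{-1}\!\!\sup_{\Qb \in \BB(\Pbcen,\thetacen)}\Eb^\Qb[g(x,w)-t]_+\Bigr\}.
\end{align*}

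Since $\Pbcen$ has finite support, I then apply the Wasserstein strong duality from~\cite{PME-DK:18} to the inner worst-case expectation with loss $\varphi(w) := [g(x,w)-t]_+$, which yields
\begin{align*}
\sup_{\Qb \in \BB(\Pbcen,\thetacen)}\Eb^\Qb[\varphi(w)] = \inf_{\lambda \ge 0}\Bigl\{\lambda\thetacen + \sum_{\ell=1}^L p_\ell \sup_{w \in \WW}[\varphi(w) - \lambda\norm{w - \wcen_\ell}]\Bigr\}.
\end{align*}

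The heart of the proof is then to upper-bound the $L$ atom-wise inner problems $\sup_{w\in\WW}\{[\dmin - t - \dist(Cx,\OO_w)]_+ - \lambda\norm{w - \wcen_\ell}\}$ by an expression that is linear in the auxiliary variables. I would do this in three sub-steps. First, I introduce slacks $s_\ell \ge 0$ and require $s_\ell/p_\ell$ to dominate the sup of the objective with the outer $[\cdot]_+$ stripped off; nonnegativity of $s_\ell$ takes care of the zero branch of the plus function, so this collectively upper-bounds the $[\cdot]_+$ version. Second, I linearize $-\dist(Cx,\OO_w)$ via the support-function/LP-dual representation $\dist(q,\OO_w) = \sup_{\nu\ge 0,\,\norm{A^\top\nu}\le 1}\{\nu^\top A(q-w) - \nu^\top b\}$, obtained by writing the Euclidean distance as the support of the unit ball against $\OO_w$ and applying LP strong duality on the polytope $\OO$; any feasible $\nu$ then yields the affine-in-$w$ upper bound $\dmin - t - \nu^\top(ACx - b) + (A^\top\nu)^\top w - \lambda\norm{w - \wcen_\ell}$ on the integrand. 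Third, I handle the residual maximization $\sup_{w\in\WW}\{(A^\top\nu)^\top w - \lambda\norm{w - \wcen_\ell}\}$ by LP duality with multipliers $\gamma_\ell \ge 0$ attached to the constraints $Hw \le h$; the supremum is finite iff $\norm{A^\top\nu - H^\top\gamma_\ell} \le \lambda$, in which case it equals $(A^\top\nu - H^\top\gamma_\ell)^\top\wcen_\ell + \gamma_\ell^\top h$.

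Assembling the three bounds and identifying $\eta$ with $-t$, up to a rescaling of $(\lambda, s_\ell)$ that absorbs the $\beta^{-1}$ into the outer objective $\lambda\thetacen - \beta\eta + \sum_\ell s_\ell$, reproduces the finite-dimensional program in the statement. The principal obstacle is the middle sub-step: I must choose a dual representation of $\dist$ in which the multiplier $\nu$ appears simultaneously as a norm-bounded vector (forcing $\norm{A^\top\nu}\le 1$) and as a linear coefficient $(A^\top\nu)^\top w$ in the subsequent $w$-maximization, because it is precisely this coupling that produces the joint constraint $\norm{A^\top\nu - H^\top\gamma_\ell}\le\lambda$ in the final program. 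Once this is in place, keeping every sup/inf exchange on the weak-duality side so that the chained bound remains one-sided is routine bookkeeping.
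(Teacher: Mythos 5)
Your chain of dualities (the CVaR variational form combined with a $\sup$--$\inf$ exchange, the Mohajerin Esfahani--Kuhn dual of the worst-case expectation over a finitely supported center, LP duality for $\dist(Cx,\OO_w)$ over the polytope $\OO_w$, and conjugate/LP duality for the residual maximization over $\WW$ producing the constraint $\norm{A^\top\nu - H^\top\gamma_\ell}\le\lambda$) is exactly the standard argument behind the proof that the paper omits and defers to~\cite{AN-ARH:2022}, and each individual bound checks out, including the treatment of the positive part via $s_\ell\ge 0$. One caveat: the ``rescaling that absorbs $\beta^{-1}$'' amounts to multiplying the whole bound by $\beta$, so what your argument literally establishes is that the program value dominates $\beta\,\sup_{\Qb}\CVaR_\beta^{\Qb}[g(x,w)]$ rather than the worst-case CVaR itself; this normalization quirk is already present in the lemma as stated and is harmless for its only use (certifying nonpositivity in~\eqref{eq:safe-1}), but you should note that the inequality~\eqref{eq:worst-case} as written follows from your bound only when its left-hand side is nonpositive.
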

The proof follows from a similar result in~\cite{AN-ARH:2022} and hence is omitted. %
Note that the above result is written for a general ambiguity set defined with the center $\Pbcen$ and radius $\thetacen$. Such a choice is motivated by the fact that in defining the safety set $\XcluWass^{j-1}$, we will use the distribution derived from clustered data instead of the empirical one.

\subsection{Safety set $\XWass^{j-1}$}
For obtaining the safety set $\XWass^{j-1}$, we replace $\Pbcen$ and $\thetacen$ in Lemma~\ref{le:reform} with the empirical distribution $\Pbhat_{N_{j-1}}$ and $\theta_{j-1}$, respectively. Then, we define $\XWass^{j-1}$ as the set of $x$ for which the upper bound on the worst-case risk given in~\eqref{eq:worst-case} is nonnegative. That is, in this case 
\begin{align}
	\Xb&(\Data^{j-1},\theta_{j-1}) = \XWass^{j-1} \label{eq:safe-1}
	\\
	&: = \Big\{x \in \XX | \exists \nu, \gamma_\ell\in\realnonnegative^M, \;\lambda, s_\ell\in \realnonnegative, \;\eta \in \real,\; \notag
		 \\
	& \qquad  \quad \text{such that } \notag 
	\\
 	& \qquad  \textstyle \lambda \theta_{j-1} - \beta\eta + \sum_{\ell = 1}^{N_{j-1}} s_\ell \le 0, \notag 
 \\
	& \qquad   p_\ell^{-1}s_\ell - \eta \ge \dmin - \big([ACx-b]^\top\nu \notag 
	\\
	& \qquad   \, \, -[(A^\top\!\nu\!-\! H^\top\!\gamma_\ell)^\top\! \what_\ell\!+\!\gamma_\ell^\top\! h]\big), \forall \ell\in [N_{j-1}], \notag 
\\
	&\qquad   \norm{A^\top\nu\!-\! H^\top\gamma_\ell}\!\le\!\lambda, \, \norm{A^\top\nu}\!\le\!1,  \forall \ell\in[N_{j-1}] \Big\}. \notag
\end{align}
The above definition and the inequality~\eqref{eq:worst-case} imply that if $x \in \XWass^{j-1}$, then $\sup_{\Qb \in \mathcal{B}(\Pbhat_{N_{j-1}},\theta_{j-1})} \CVaR_{\beta}^\Qb[g(x,w)] \le 0$. Hence, when $\Xsafeiter^{j-1} = \XWass^{j-1}$ in Line~\ref{ln:ambiguity} of Algorithm~\ref{ag:DR-iteration}, then the generated trajectory in the $j$-th iteration is $\Pb$-risk safe with probability $\zeta$. 

Notice that in one of the constraints on the right-hand side of~\eqref{eq:safe-1}, the decision variables $x$ and $\nu$ appear in a bilinear term. Hence, enforcing the state to be in $\XWass^{j-1}$ renders the optimization problem  nonconvex with the number of constraints and the decision variables scaling with the size of the dataset. This poses a computational challenge when solving the finite-horizon problem~\eqref{eq:DR-RLMPC:main} with $\overline{\XX}$ set as $\XWass^{j-1}$ given in~\eqref{eq:safe-1}, especially since the terminal constraint in~\eqref{eq:DR-RLMPC:main} is equivalent to a mixed-integer one. To alleviate this roadblock, we propose the following two alternatives. 

\subsection{Safety set $\XcluWass^{j-1}$}
As explained above, using each sample in defining the ambiguity set increases the computational burden in the reformulated problem. Hence, we use clustering following the ideas outlined in~\cite{DL-SM:2020} and~\cite{FF-PJG:2021}. 
In particular, we assume that the designer selects the number of clusters $\Ncl \in \naturalnumbers_{\ge 1}$ based on the trade-off between accuracy and computational cost. Given $\Ncl$ and samples $\{\what_1, \dots, \what_{N_{j-1}}\}$, we determine the centers of clusters $\{w^{\mathrm{clu},j-1}_\ell\}_{\ell =1}^{\Ncl} \subset \WW$ and an association map $\idmap:\{1,\dots,N_{j-1}\} \to \{1,\dots,\Ncl\}$ such that each sample $\what_i$ is associated with some cluster given by $\idmap(i)$. Then, we form the clustered empirical distribution as 
\begin{align*}
	\Pb^{\mathrm{clu},j-1} := \textstyle \sum_{\ell =1}^{\Ncl} p_\ell \delta_{w^{\mathrm{clu},j-1}_\ell},
\end{align*} 
where for each $\ell \in \{1,\dots,\Ncl\}$, we have
\begin{align*}
	p_\ell = \frac{\abs{\setdef{i \in \{1,\dots,N_{j-1}\}}{\idmap(i) = \ell}} }{N_{j-1}}. 
\end{align*}
That is, the fraction of points associated with the cluster $\ell$. Further, we select the radius of the ambiguity set as $\theta_{\mathrm{clu},j-1} = \theta_{j-1} + \bar{d}$, where
	$\bar{d} = \max_{i \in \{1,\dots,N_{j-1}\}} \norm{\what_i - w_{\idmap(i)}^{\mathrm{clu},j-1}}$.
We then use $\Pb^{\mathrm{clu},j-1}$ and $\theta_{\mathrm{clu},j-1}$ in place of $\Pbcen$ and $\thetacen$, respectively, in Lemma~\ref{le:reform} and define the safety set 
\begin{align}
	\Xb&(\Data^{j-1},\theta_{j-1}) = \XcluWass^{j-1}  \label{eq:safe-2}
	\\
	& := \Big\{x \in \XX | \exists \nu, \gamma_\ell\in\realnonnegative^M, \;\lambda, s_\ell\in \realnonnegative, \;\eta \in \real,\; \notag
		\\
		& \qquad \quad \text{such that } \notag 
		\\
		&  \qquad  \textstyle \lambda \theta_{\mathrm{clu},j-1} - \beta\eta + \sum_{\ell = 1}^{\Ncl} s_\ell \le 0, \notag 
		\\
		&  \qquad  p_\ell^{-1}s_\ell - \eta \ge \dmin - \big([ACx-b]^\top\nu \notag 
		\\
		&  \qquad  \, \, -[(A^\top\!\nu\!-\! H^\top\!\gamma_\ell)^\top\! w_\ell^{\mathrm{clu},j-1}\!+\!\gamma_\ell^\top\! h]\big), \forall \ell\in [\Ncl], \notag 
		\\
		& \qquad  \norm{A^\top\nu\!-\! H^\top\gamma_\ell}\!\le\!\lambda, \, \norm{A^\top\nu}\!\le\!1,  \forall \ell\in[\Ncl] \Big\}. \notag  
\end{align}

Note that the advantage of the above safety set lies in the fact that the set is defined with fewer constraints and this number only depends on the number of clusters $\Ncl$ instead of $N_{j-1}$. While we have reduced the number of points in the support of the center of the ambiguity set, we have increased the radius. In the process we have retained the guarantee: 
\begin{lemma}\longthmtitle{Confidence guarantee of $\Pb$ contained in clustered data ambiguity set}
	Consider any iteration $j$ and suppose Assumption~\ref{as:conf} holds. 
	Then, we have
	\begin{align*}
		\operatorname{Prob}[\Pb \in \mathcal{B}(\Pb^{\mathrm{clu},j-1},\theta_{\mathrm{clu},j-1})] \ge \zeta.
	\end{align*}
\end{lemma}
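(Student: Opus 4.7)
The plan is to show that the event $\{\Pb \in \mathcal{B}(\Pbhat_{N_{j-1}},\theta_{j-1})\}$ (which by Assumption~\ref{as:conf} has probability at least $\zeta$) is contained in the event $\{\Pb \in \mathcal{B}(\Pb^{\mathrm{clu},j-1},\theta_{\mathrm{clu},j-1})\}$. Monotonicity of probability then yields the claim. The bridge between the two events is the triangle inequality for the Wasserstein metric $d_W$, combined with a direct bound on the Wasserstein distance between the empirical distribution $\Pbhat_{N_{j-1}}$ and the clustered empirical distribution $\Pb^{\mathrm{clu},j-1}$.

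First, I would bound $d_W(\Pbhat_{N_{j-1}},\Pb^{\mathrm{clu},j-1})$ by exhibiting an explicit transport plan. Consider the coupling $\gamma \in \HH(\Pbhat_{N_{j-1}},\Pb^{\mathrm{clu},j-1})$ that moves the mass $1/N_{j-1}$ located at each sample $\what_i$ entirely to the cluster center $w^{\mathrm{clu},j-1}_{\idmap(i)}$. The first marginal of $\gamma$ is clearly $\Pbhat_{N_{j-1}}$, and the second marginal assigns to each $w^{\mathrm{clu},j-1}_\ell$ a mass equal to $|\{i:\idmap(i)=\ell\}|/N_{j-1}= p_\ell$, so this marginal equals $\Pb^{\mathrm{clu},j-1}$. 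Using the definition~\eqref{eq:def_wasserstein}, I obtain
\begin{equation*}
d_W(\Pbhat_{N_{j-1}},\Pb^{\mathrm{clu},j-1}) \le \frac{1}{N_{j-1}}\sum_{i=1}^{N_{j-1}} \norm{\what_i - w^{\mathrm{clu},j-1}_{\idmap(i)}} \le \bar{d},
\end{equation*}
where the last inequality is the definition of $\bar{d}$.

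Second, whenever the sample realization is such that $\Pb \in \mathcal{B}(\Pbhat_{N_{j-1}},\theta_{j-1})$, I use the triangle inequality of $d_W$ (a bona fide metric on $\PP(\WW)$ since $\WW$ is compact) to conclude
\begin{equation*}
d_W(\Pb,\Pb^{\mathrm{clu},j-1}) \le d_W(\Pb,\Pbhat_{N_{j-1}}) + d_W(\Pbhat_{N_{j-1}},\Pb^{\mathrm{clu},j-1}) \le \theta_{j-1} + \bar{d} = \theta_{\mathrm{clu},j-1}.
\end{equation*}
This is exactly the inclusion $\Pb \in \mathcal{B}(\Pb^{\mathrm{clu},j-1},\theta_{\mathrm{clu},j-1})$. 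Since this implication holds pointwise on the probability space from which the samples (and hence $\Pb^{\mathrm{clu},j-1}$ and $\bar{d}$) are drawn, the target event contains the Assumption~\ref{as:conf} event, and $\operatorname{Prob}[\Pb \in \mathcal{B}(\Pb^{\mathrm{clu},j-1},\theta_{\mathrm{clu},j-1})] \ge \zeta$ follows.

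There is no real obstacle here; the only subtle point is to verify that the proposed coupling indeed has the correct second marginal, which reduces to matching the cluster weights $p_\ell$ to the cardinalities of the preimages $\idmap^{-1}(\ell)$. Everything else is a one-line application of the triangle inequality and monotonicity of probability, so the proof should be very short.
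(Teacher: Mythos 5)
Your proof is correct and follows exactly the route the paper defers to (it cites \cite[Lemma VIII.2]{DL-SM:2020} rather than writing it out): the explicit coupling sending each $\what_i$ to $w^{\mathrm{clu},j-1}_{\idmap(i)}$ gives $d_W(\Pbhat_{N_{j-1}},\Pb^{\mathrm{clu},j-1})\le\bar{d}$, and the triangle inequality plus monotonicity of probability finishes the argument. No gaps.
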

The proof follows directly from that of~\cite[Lemma VIII.2]{DL-SM:2020}. Owing to the above result, by ensuring $\Xsafeiter^{j-1} = \XcluWass^{j-1}$ for iteration $j$, we obtain a trajectory that satisfies the risk constraint~\eqref{eq:IHOCP-d} with probability $\zeta$. 
Finally, we comment that there are several ways of obtaining the clustered distribution $\Pbclu$ from the dataset $\Data^{j-1}$. In our numerical example, we use $K$-means clustering. Another interesting approach is to use the I-Cover algorithm from~\cite{DL-SM:2020}. However, in this case, the number of clusters grows as the number of samples increases.

\subsection{Safety set $\XinnWass^{j-1}$}

Here we take a different approach as compared to the earlier methods. Instead of imposing distributionally robust constraints in the finite-horizon problem of the MPC, we create an inner estimate of the set $\XX^{j-1}_{\mathrm{DR}}$ (see~\eqref{eq:XWass}) at the beginning of the $j$-th iteration. We then force our trajectory to lie in this inner-estimated set during the MPC implementation. To achieve this, we present the following result that serves as a tool for constructing the inner estimation of $\XX^{j-1}_{\mathrm{DR}}$.
\begin{proposition}\longthmtitle{Inner-estimating $\XX^{j-1}_{\mathrm{DR}}$}\label{pr:inner}
	Consider the $j$-th iteration and the ambiguity set $\mathcal{B}(\Pbhat_{N_{j-1}},\theta_{j-1})$ given in~\eqref{eq:B-Wass}. Define for each $m \in [M]$ the set   
	\begin{align*}
		\XXinner{j-1}_m & := \setdef{x \in \XX}{\sup_{\Qb \in \mathcal{B}(\Pbhat_{N_{j-1}},\theta_{j-1})} \CVaR_\beta^\Qb [b_m + A_m^\top w 
			\\
			& \qquad  \qquad \qquad \qquad + \dmin - A_m^\top C x] \le 0},
	\end{align*}
	where we recall that $A_m$ and $b_m$ are the $m$-th row of $A$ and $m$-th component of $b$, respectively. Let $\XXinner{j-1} := \cup_{m=1}^M \XXinner{j-1}_m$. If $x \in \XXinner{j-1}$, then 
	\begin{align*}
		\sup_{\Qb \in \mathcal{B}(\Pbhat_{N_{j-1}},\theta_{j-1})} \CVaR_\beta^\Qb [ g(x,w) ] \le 0,
	\end{align*}
	where $g$ is given in~\eqref{eq:g-func}.
\end{proposition}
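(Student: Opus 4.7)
The plan is to establish a pointwise upper bound on $g$ by each of the affine functions $h_m(x,w) := b_m + A_m^\top w + \dmin - A_m^\top C x$ used in the definition of $\XXinner{j-1}_m$, and then lift this bound to the worst-case CVaR by monotonicity.

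First I would unpack the definition of $g$. Using the polyhedral form of $\OO_w$ and the identity
\begin{align*}
\dist(Cx,\OO_w) = \max_{m' \in [M]} [A_{m'} Cx - A_{m'} w - b_{m'}]_+
\end{align*}
recalled in the problem statement, I would write $g(x,w) = \dmin - \max_{m' \in [M]} [A_{m'} Cx - A_{m'} w - b_{m'}]_+$. Fixing an arbitrary $m \in [M]$, the key pointwise inequality to prove is
\begin{align*}
g(x,w) \le h_m(x,w) \quad \text{for all } (x,w) \in \XX \times \WW.
\end{align*}
This reduces to showing $A_m Cx - A_m w - b_m \le \max_{m'}[A_{m'} Cx - A_{m'} w - b_{m'}]_+$, which is immediate since the right-hand side dominates the nonnegative part of the $m$-th term, which in turn dominates the $m$-th term itself.

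Next I would invoke the monotonicity of CVaR: since $g(x,w) \le h_m(x,w)$ pointwise in $w$, for every $\Qb \in \mathcal{B}(\Pbhat_{N_{j-1}},\theta_{j-1})$ we have $\CVaR_\beta^\Qb[g(x,w)] \le \CVaR_\beta^\Qb[h_m(x,w)]$. Monotonicity of CVaR in the underlying random variable is a standard consequence of the variational representation~\eqref{eq:cvar-def-alt}, so this step is routine. Taking the supremum over the ambiguity set on both sides preserves the inequality:
\begin{align*}
\sup_{\Qb \in \mathcal{B}(\Pbhat_{N_{j-1}},\theta_{j-1})} \CVaR_\beta^\Qb[g(x,w)] \le \sup_{\Qb \in \mathcal{B}(\Pbhat_{N_{j-1}},\theta_{j-1})} \CVaR_\beta^\Qb[h_m(x,w)].
\end{align*}

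Finally, if $x \in \XXinner{j-1}$, then by definition there exists some $m \in [M]$ with $x \in \XXinner{j-1}_m$, which makes the right-hand side above nonpositive for that particular $m$. Combining yields the desired conclusion. I do not anticipate any real obstacle; the only subtle point is recognizing that the worst-case CVaR bound uses each face $m$ independently, so the inner estimate is naturally a \emph{union} of half-space-like sets $\XXinner{j-1}_m$ rather than their intersection, and this is precisely what makes the approximation tractable (linear state constraints after reformulation via Lemma~\ref{le:reform} applied to each affine $h_m$).
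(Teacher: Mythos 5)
Your proof is correct, but it takes a genuinely different route from the paper's. You establish the pointwise dominance $g(x,w) \le h_m(x,w)$ for all $(x,w) \in \XX \times \WW$ directly from the max-formula for $\dist(Cx,\OO_w)$ (a one-line algebraic bound, since the max over $m'$ of the positive parts dominates the $m$-th term itself), and then transfer this to the worst-case CVaR via monotonicity of $\CVaR_\beta^\Qb$ in the integrand and of the supremum over the ambiguity set. The paper instead argues geometrically: it shows $h_m(x,w) \ge \max_{y \in \OO_w + \dmin \mathcal{B}} A_m^\top y - A_m^\top Cx$ via the support function of the inflated obstacle, and deduces the pointwise \emph{sign implication} ``$h_m(x,w) \le 0 \Rightarrow g(x,w) \le 0$'' by concluding $Cx \notin \mathrm{int}(\OO_w + \dmin\mathcal{B})$. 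Note that a sign implication alone does not, in general, let one pass from $\sup_\Qb \CVaR_\beta^\Qb[h_m] \le 0$ to $\sup_\Qb \CVaR_\beta^\Qb[g] \le 0$ (CVaR depends on the magnitude of exceedances, not just their sign), so the dominance $g \le h_m$ that you prove is exactly the ingredient needed to make the CVaR step rigorous; in that sense your argument is both more elementary and more complete than the paper's, at the price of leaning on the polyhedral max-formula for the distance rather than giving the geometric picture of the $\dmin$-inflated obstacle. Your closing observation that the construction yields a \emph{union} of the sets $\XXinner{j-1}_m$ is also consistent with the paper.
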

\begin{proof}
	Note that for any $y \in \OO_w$ we have $Ay \le b+Aw$. We will establish the result by showing that for any $x \in \XX$ and $w \in \WW$, if  
	\begin{align}\label{eq:b-ineq}
		b_m + A_m^\top w + \dmin  - A_m^\top C x \le 0,
	\end{align}
	for some $m \in \{1,\dots,M\}$, then $g(x,w) \le 0$. To this end, note that
	\begin{align*}
		b_m  + A_m^\top w &+ \dmin  - A_m^\top C x 
		\\
		& = b_m + A_m^\top w + \max_{\norm{f} \le \dmin} f^\top A_m - A_m^\top  C x 
		\\
		& \overset{(a)}{\ge} \max_{y \in \OO_w} A_m^\top y + \max_{\norm{f} \le \dmin} f^\top A_m - A_m^\top  C x 
		\\
		& = \max_{y \in \OO_w + \dmin \mathcal{B}} A_m^\top y - A_m^\top  C x,
	\end{align*}
	where $\mathcal{B}$ is the unit ball centered at the origin, the first equality is due to $\norm{A_m} = 1$, and (a) follows from the fact that $A_m^\top y \le b_m + A_m^\top w$ for any $y \in \OO$. From the above inequality, if~\eqref{eq:b-ineq} is satisfied, then
		$\max_{y \in \OO_w + \dmin \mathcal{B}} A_m^\top y - A_m^\top C x \le 0$.
	This implies $C x \not \in \mathrm{int}( \OO_w + \dmin \mathcal{B} )$, where $\mathrm{int}(\cdot)$ represents the interior. That is, 
	$\max_{y \in \OO_w + \dmin \mathcal{B}} \norm{C x - y} \ge 0$ and so, $\dist(Cx,\OO_w) \ge \dmin$. This completes the proof.
\end{proof}
Roughly speaking, in the above result we have constructed a safe region $\XXinner{j-1}$ as the union of sets $\XXinner{j-1}_m$ that are themselves intersection of half-spaces and the set $\XX$. The outward normal defining the half-space is chosen to be the same that defines the occupancy of the obstacle. For example, in two dimensions, if the obstacle is a square with normal directions aligning with the axes, then the region $\XXinner{j-1}$ will turn out to be the whole space $C \XX$ except for the set of points that belong to a rectangular region. This fact will become more clear in our simulation section. 

In light of Proposition~\ref{pr:inner}, given the dataset $\Data^{j-1}$ and radius $\theta_{j-1}$, we set 
\begin{align}\label{eq:safe-3}
	\Xb(\Data^{j-1},\theta_{j-1}) = \XinnWass^{j-1} = \XXinner{j-1}
\end{align}
in Algorithm~\ref{ag:DR-iteration}. Then, the trajectory generated in the $j$-th iteration is $\Pb$-risk safe with probability $\zeta$. We next comment on the procedure of determining the constraints defining the set $\XinnWass^{j-1}$. We first compute
\begin{align*}
	\binner{j-1}_m = \sup_{\Qb \in \mathcal{B}(\Pbhat_{N_{j-1}},\theta_{j-1})} \CVaR_\beta^\Qb [A^\top_m w]
\end{align*} 
for all $m \in \{1,\dots,M\}$. Consequently, we set %
\begin{align*}
	\XinnWass^{j-1} &= \cup_{m=1}^M \setdef{x \in \XX}{
		\\
		& \qquad b_m + \binner{j-1}_m + \dmin - A_m^\top C x \le 0}.
\end{align*}
While the above set has a simple form, we still require computing the scalar values $\{\binner{j-1}_m\}$. To this end, we provide the following finite-dimensional optimization problem:
\begin{align*}
	&\binner{j-1}_m = 
	\\
	&\begin{cases} \nonumber\inf  & \, \, \lambda\theta_{j-1} \! - \! \beta\eta + \sum_{\ell = 1}^L s_\ell
		\\
		\st   
		& \, \, p_\ell^{-1}s_\ell \! - \! \eta \! \ge \! (A_m^\top \!- \! H_m^\top\xi_\ell)^\top \what_\ell + \xi_\ell h_m,  \forall  \ell \! \in \! [N_{j-1}],
		\\
		& \, \, \norm{A_m^\top-H_m^\top\xi_\ell} \le \lambda, \qquad \forall \ell \in [N_{j-1}],
		\\
		& \, \, \lambda, s_\ell, \xi_\ell \in \realnonnegative, \;\eta \in \real, \quad\! \forall \ell \in [N_{j-1}]. %
	\end{cases}
\end{align*}

We next conclude this section with the guarantees that our methods enjoy. The first result states the safety and recursive feasibility and then we present the asymptotic convergence. The proofs are omitted as they are similar to those given in~\cite{AZ-AC:22-ecc} for analogous statements.

\begin{proposition}\longthmtitle{Safety and recursive feasibility of Algorithm~\ref{ag:DR-iteration}}
	 	Let Assumption~\ref{as:conf} and~\ref{assump:safeset} hold. Then, for each of the safety sets represented by maps~\eqref{eq:safe-1},~\eqref{eq:safe-2}, and~\eqref{eq:safe-3}, at each iteration $j\ge1$ and time step $t \ge 0$, the finite-horizon problem~\eqref{eq:DR-RLMPC:main} with $x = x_t^j$, $\SSo = \SSs^{j-1}$, and $\overline{\XX} = \mathbb{X}(\Data^{j-1},\theta_{j-1})$ solved in Algorithm~\ref{ag:DR-iteration} is feasible. Further, each point in the generated trajectory $(\xtraj^j,\utraj^j)$ satisfies the risk-constraint~\eqref{eq:IHOCP-d} with probability $\zeta$. 
\end{proposition}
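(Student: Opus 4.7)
The plan is to prove the safety and recursive feasibility claims separately, in each case by induction on the iteration index $j$ with a nested induction on the time step $t$ inside the call to $\drmpc$.

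\emph{Safety.} I would first establish a common per-point safety fact valid for all three safety sets: if $x \in \Xsafeiter^{j-1}$, then $\sup_{\Qb \in \mathcal{B}}\CVaR_\beta^\Qb[g(x,w)] \le 0$, where $\mathcal{B}$ is the Wasserstein ball used to define $\Xsafeiter^{j-1}$. For $\XWass^{j-1}$ and $\XcluWass^{j-1}$ this follows directly from Lemma~\ref{le:reform} (with the empirical or clustered center and matching radius), while for $\XinnWass^{j-1}$ it follows from Proposition~\ref{pr:inner} together with the definition of $\binner{j-1}_m$. Assumption~\ref{as:conf}, and its clustered-data variant given by the lemma preceding~\eqref{eq:safe-2}, would then imply $\Pb \in \mathcal{B}$ with probability at least $\zeta$. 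Since the MPC constrains every point of $\xtraj^j$ to lie in $\Xsafeiter^{j-1}$, the risk constraint~\eqref{eq:IHOCP-d} holds at each such point with probability $\zeta$.

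\emph{Recursive feasibility.} Fix iteration $j$ and suppose that~\eqref{eq:DR-RLMPC:main} is feasible at time $t$ with optimizer $(x^*_{\mathrm{vec},t},u^*_{\mathrm{vec},t})$; the terminal constraint guarantees $x^*_{t+K|t} \in \Pis{\SSs^{j-1}}$, so there exist $i,\tau$ with $(i,x^i_\tau,\cdot)\in\SSs^{j-1}$, $x^i_\tau = x^*_{t+K|t}$, and $x^i_{\tau+1} = f(x^i_\tau,u^i_\tau)$. The candidate states $(x^*_{t+1|t},\dots,x^*_{t+K|t},x^i_{\tau+1})$ together with controls $(u^*_{t+1|t},\dots,u^*_{t+K-1|t},u^i_\tau)$ obey the dynamics, lie in $\Xsafeiter^{j-1}$ (the shifted tail by feasibility at $t$, and $x^i_{\tau+1} \in \Pis{\SSs^{j-1}} \subseteq \Xsafeiter^{j-1}$ thanks to the filtering in Lines~\ref{ln:uns-traj}--\ref{ln:ss-update} from iteration $j-1$), and have their new terminal state in $\Pis{\SSs^{j-1}}$, making~\eqref{eq:DR-RLMPC:main} feasible at $t+1$. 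For the base case $t=0$ of iteration $j$, feasibility reduces to exhibiting a trajectory in $\SSs^{j-1}$ that starts at $x_S$ with all states in $\Xsafeiter^{j-1}$. At $j=1$ this follows from Assumption~\ref{assump:safeset}, since a robustly safe point satisfies $\CVaR_\beta^\Qb[g(\cdot,w)]\le 0$ for every $\Qb$ supported on $\WW$ and hence lies in $\XWass^0$ and $\XcluWass^0$; for $j \ge 2$, the trajectory generated by the inner induction in iteration $j-1$, together with the filtering step, yields the required starter in $\SSs^{j-1}$.

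\emph{Main obstacle.} The crux will be propagating non-emptiness of the safe set across iterations: the filtering in Lines~\ref{ln:uns-traj}--\ref{ln:ss-update} can in principle discard every stored trajectory when $\Xsafeiter^j$ shrinks relative to $\Xsafeiter^{j-1}$. I would close this gap via a ``robust-implies-DR-safe'' argument to show that the initial trajectory from Assumption~\ref{assump:safeset} survives every filtering under $\XWass$ and $\XcluWass$, and, for the inner-approximation case, by either strengthening Assumption~\ref{assump:safeset} to require the initial trajectory to lie in $\XinnWass^0$ as well, or by exploiting the polyhedral geometry noted after Proposition~\ref{pr:inner} to show that a sufficiently robust initial trajectory is retained inside each $\XinnWass^{j-1}$.
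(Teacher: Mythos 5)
The paper omits its own proof of this proposition, deferring to the analogous arguments in \cite{AZ-AC:22-ecc}, so your proposal can only be compared against that intended standard argument; your reconstruction (per-point DR safety via Lemma~\ref{le:reform}, its clustered-data variant, and Proposition~\ref{pr:inner}, plus the shift-and-append candidate for recursive feasibility) is exactly that argument in outline, and the shift construction, the role of the filtering in Lines~\ref{ln:uns-traj}--\ref{ln:ss-update} in guaranteeing $\Pis{\SSs^{j-1}}\subseteq\Xsafeiter^{j-1}$, and the base case at $t=0$ are all handled correctly.

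The one step that does not go through as written is your closing ``robust-implies-DR-safe'' claim for $\XWass$ and $\XcluWass$. A robustly safe point $x$ indeed satisfies $\sup_{\Qb}\CVaR_\beta^{\Qb}[g(x,w)]\le 0$ for any ambiguity set supported on $\WW$, but membership in $\XWass^{j-1}$ as defined in~\eqref{eq:safe-1} requires the existence of dual certificates $(\nu,\gamma_\ell,\lambda,s_\ell,\eta)$ making the \emph{upper bound} of~\eqref{eq:worst-case} nonpositive, and Lemma~\ref{le:reform} is stated only as an inequality: nonpositivity of the left-hand side does not imply nonpositivity of the right-hand side. So survival of the Assumption~\ref{assump:safeset} trajectory under the filtering---which you correctly isolate as the crux of recursive feasibility, since otherwise $\SSs^j$ could become empty---needs either a tightness/certification argument for robustly safe points or a strengthening of Assumption~\ref{assump:safeset} to require $\Pis{\SSs^0}\subseteq\Xsafeiter^{j}$ for all $j$. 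For $\XinnWass$ you rightly observe that the assumption as stated is genuinely insufficient (a robustly safe point need not lie in a union of half-spaces that only inner-approximates the DR-safe region) and your proposed fixes are the natural ones. This gap is present in the paper's statement as much as in your sketch; flagging it and proposing the repair is the most valuable part of your proposal.
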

\begin{proposition}\longthmtitle{Asymptotic convergence of Algorithm~\ref{ag:DR-iteration}}
	Let Assumption~\ref{assump:safeset} hold. Then, for each of the safety sets represented by maps~\eqref{eq:safe-1},~\eqref{eq:safe-2}, and~\eqref{eq:safe-3} and each iteration $j \ge 1$ of Algorithm~\ref{ag:DR-iteration}, the trajectory $(\xtraj^j,\utraj^j)$ generated by $\drmpc$ satisfies $x^j_t \to x_F$ as $t \to \infty$.
\end{proposition}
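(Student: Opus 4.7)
The plan is to use the standard Lyapunov-type argument from iterative learning MPC (cf.~\cite{UR-FB:17-tac}) and to observe that the three choices of safety set~\eqref{eq:safe-1}--\eqref{eq:safe-3} only affect the constraint $x_k \in \Xappsafe$ in~\eqref{eq:DR-RLMPC:main} and not the terminal machinery $x_K \in \Pis{\SSo}$, $\overline{Q}$. Since Lines~\ref{ln:uns-traj}--\ref{ln:ss-update} of Algorithm~\ref{ag:DR-iteration} discard any stored trajectory containing a state outside $\Xsafeiter^{j-1}$, every state in $\Pis{\SSs^{j-1}}$ lies in $\Xsafeiter^{j-1}$. This single fact is what makes the classical argument carry over uniformly to all three options.

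Fix $j \ge 1$ and let $\VV^j(x) := \JJ_{(\SSs^{j-1}, \Xsafeiter^{j-1})}(x)$, with $(x_{\mathrm{vec},t}^*, u_{\mathrm{vec},t}^*)$ denoting the optimizer of~\eqref{eq:DR-RLMPC:main} at state $x_t^j$, as in~\eqref{eq:optSol}. I would show that $\VV^j$ is a Lyapunov function along the closed-loop trajectory. To this end, I would build a feasible candidate at time $t+1$ by shifting: use $(x_{t+1|t}^*, \dots, x_{t+K|t}^*, x^+)$ and $(u_{t+1|t}^*, \dots, u_{t+K-1|t}^*, u^+)$, where $(x^+, u^+)$ is the successor pair of $x_{t+K|t}^*$ along a stored trajectory in $\SSs^{j-1}$ that realizes $Q^{j-1}(x_{t+K|t}^*)$; such a trajectory exists because $x_{t+K|t}^* \in \Pis{\SSs^{j-1}}$ and $Q^{j-1}$ is a minimum over a finite index set. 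Feasibility of this candidate in~\eqref{eq:DR-RLMPC:main} at $x_{t+1}^j$ follows from: (i) $x_{t+1}^j = x_{t+1|t}^*$ via the dynamics; (ii) $x_{t+2|t}^*, \dots, x_{t+K|t}^*$ already lie in $\Xsafeiter^{j-1}$; (iii) $x^+ \in \Pis{\SSs^{j-1}} \subseteq \Xsafeiter^{j-1}$ by the filtering in Lines~\ref{ln:uns-traj}--\ref{ln:ss-update}; and (iv) $u^+ \in \UU$ since it was part of a stored feasible trajectory.

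Next, I would bound the cost of the candidate. By the definition of $Q^{j-1}$ and the chosen successor, $Q^{j-1}(x^+) \le Q^{j-1}(x_{t+K|t}^*) - r(x_{t+K|t}^*, u^+)$. Telescoping this into the candidate cost and using $u_t^j = u_{t|t}^*$ gives
\begin{equation*}
	\VV^j(x_{t+1}^j) \le \VV^j(x_t^j) - r(x_t^j, u_t^j).
\end{equation*}
Since $\VV^j \ge 0$ and $\{\VV^j(x_t^j)\}_t$ is non-increasing, it converges, which forces $r(x_t^j, u_t^j) \to 0$.

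Finally, I would conclude convergence. Because $(x_t^j, u_t^j)$ lies in the compact set $\XX \times \UU$, any accumulation point $(x^*, u^*)$ satisfies $r(x^*, u^*) = 0$ by continuity of $r$, hence $(x^*, u^*) = (x_F, 0)$ by the standing assumption that $r$ vanishes only at $(x_F,0)$. Thus $x_t^j \to x_F$ as $t \to \infty$. The main subtlety lies in selecting the one-step extension $(x^+, u^+)$ so that it simultaneously (a) keeps the candidate inside the current safety set and (b) decreases the terminal cost by exactly the stage cost it adds; both properties follow from the algorithm's explicit safeguarding of $\SSs^{j-1}$, which is why no part of the argument needs to know whether $\Xsafeiter^{j-1}$ equals $\XWass^{j-1}$, $\XcluWass^{j-1}$, or $\XinnWass^{j-1}$.
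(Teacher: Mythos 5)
Your proposal is correct and is essentially the argument the paper intends: the paper omits the proof, deferring to the analogous statements in~\cite{AZ-AC:22-ecc}, which in turn follow the standard shifted-candidate Lyapunov argument of iterative learning MPC~\cite{UR-FB:17-tac} that you reproduce. Your key observation --- that the filtering in Lines~\ref{ln:uns-traj}--\ref{ln:ss-update} enforces $\Pis{\SSs^{j-1}} \subseteq \Xsafeiter^{j-1}$, so the terminal state of the shifted candidate remains admissible regardless of which of the three safety sets is used --- is exactly what makes the classical proof apply uniformly to~\eqref{eq:safe-1},~\eqref{eq:safe-2}, and~\eqref{eq:safe-3}.
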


\begin{figure}%
	\centering
	\begin{subfigure}[b]{0.79\columnwidth}
		\centering
		\includegraphics[width=\linewidth]{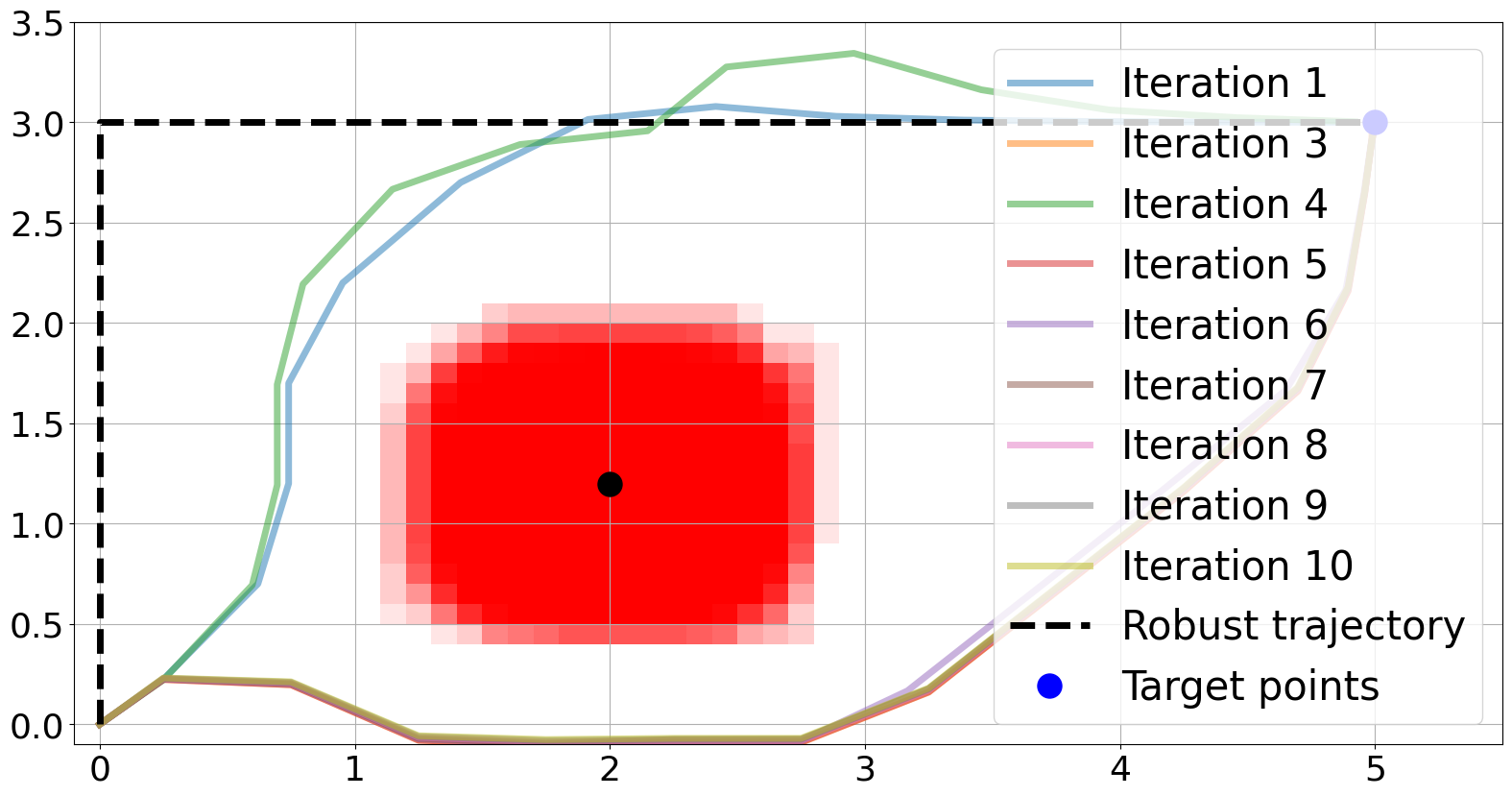}
		\caption[Network2]%
		{{\footnotesize Safety set $\XWass$}}    
		\label{fig:trajs_a}
	\end{subfigure}
	\\
	\begin{subfigure}[b]{0.79\columnwidth}  
		\centering 
		\includegraphics[width=\linewidth]{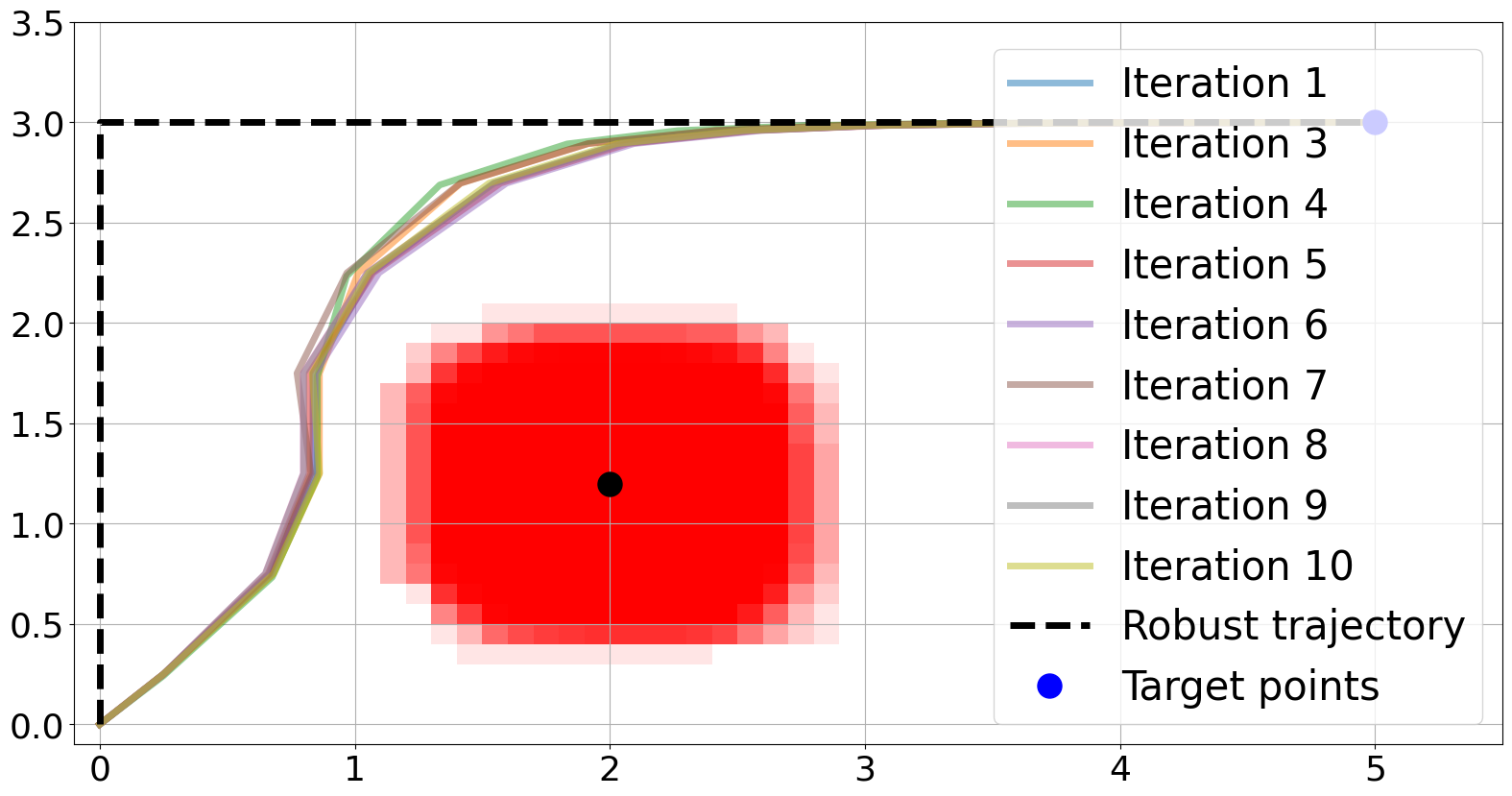}
		\caption[]%
		{{\footnotesize Safety set $\XcluWass$}}
		\label{fig:trajs_b}
	\end{subfigure}
\\
	\begin{subfigure}[b]{0.79\columnwidth}   
		\centering 
		\includegraphics[width=\linewidth]{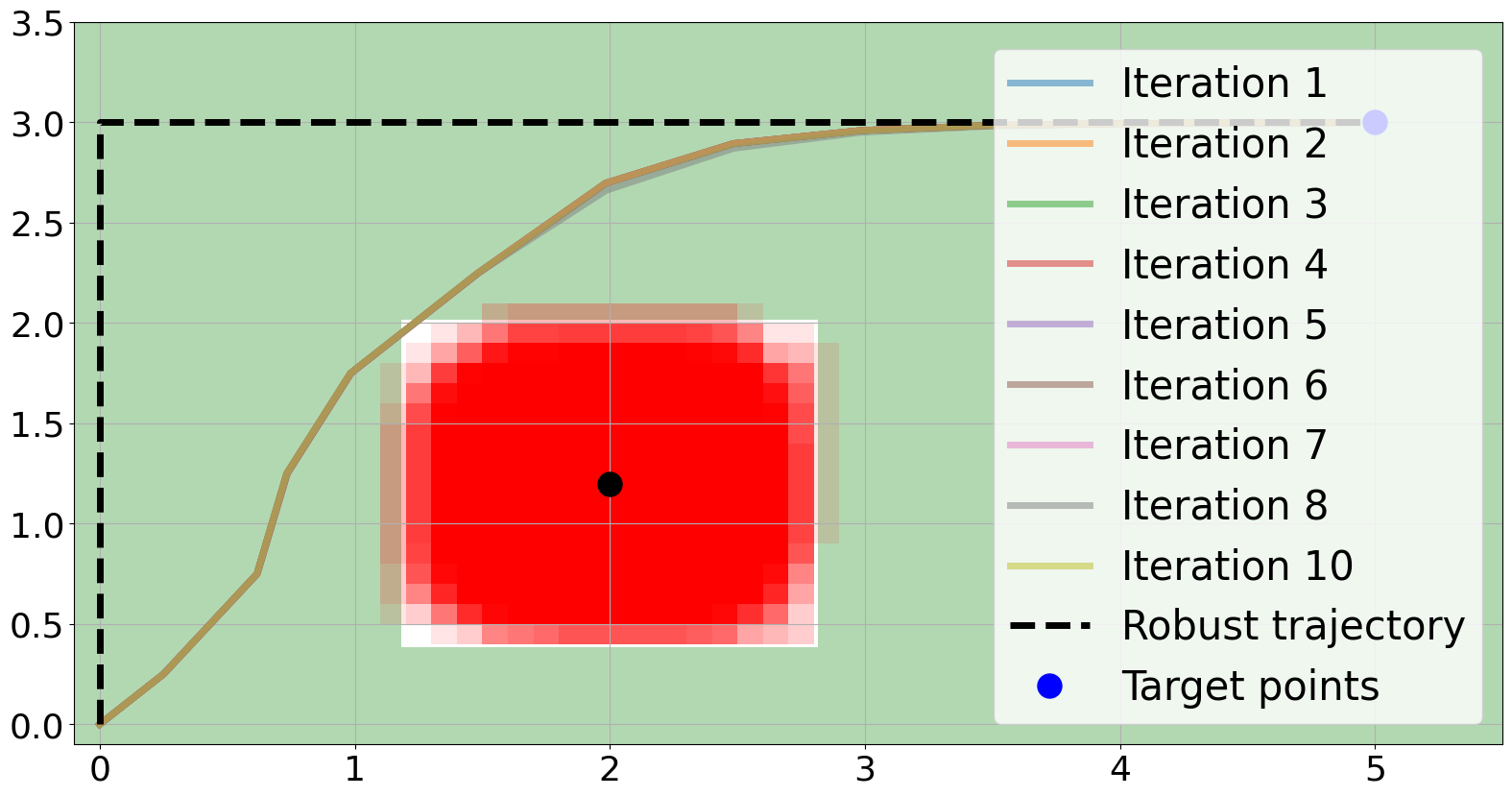}
		\caption[]%
		{{\footnotesize Safety set $\XX^{\mathrm{inn}}$}}    
		\label{fig:trajs_c}
	\end{subfigure}
	\caption{\footnotesize Plots depicting the application of Algorithm~\ref{ag:DR-iteration} with the presented approximations for a path planning task in presence of an uncertain obstacle (see Section~\ref{sec:sims} for details). The dashed black line represents the initial robust trajectory and the green area in Figure~\ref{fig:trajs_c} shows $\XX^{\mathrm{inn}}$ in the last iteration. The obstacle's occupancy is depicted by the shaded red heat map.} %
\vspace*{-3ex}
\label{fig:trajs}
\end{figure}
\begin{figure*}%
\centering
\begin{subfigure}[b]{0.31\linewidth}
	\centering
	\includegraphics[width=\linewidth]{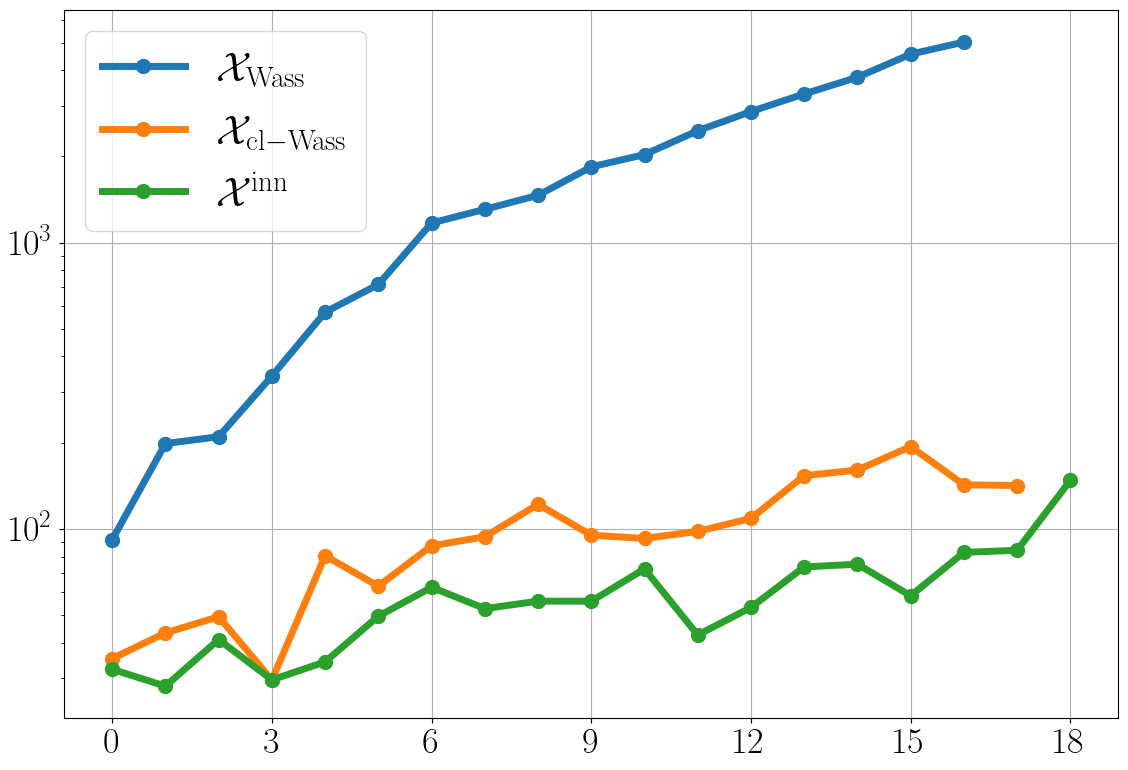}
	\caption[Network2]%
	{{\footnotesize Elapsed time in each iteration (sec)}}    
	\label{fig:iter}
\end{subfigure}
\quad
\begin{subfigure}[b]{0.31\linewidth}  
	\centering 
	\includegraphics[width=\linewidth]{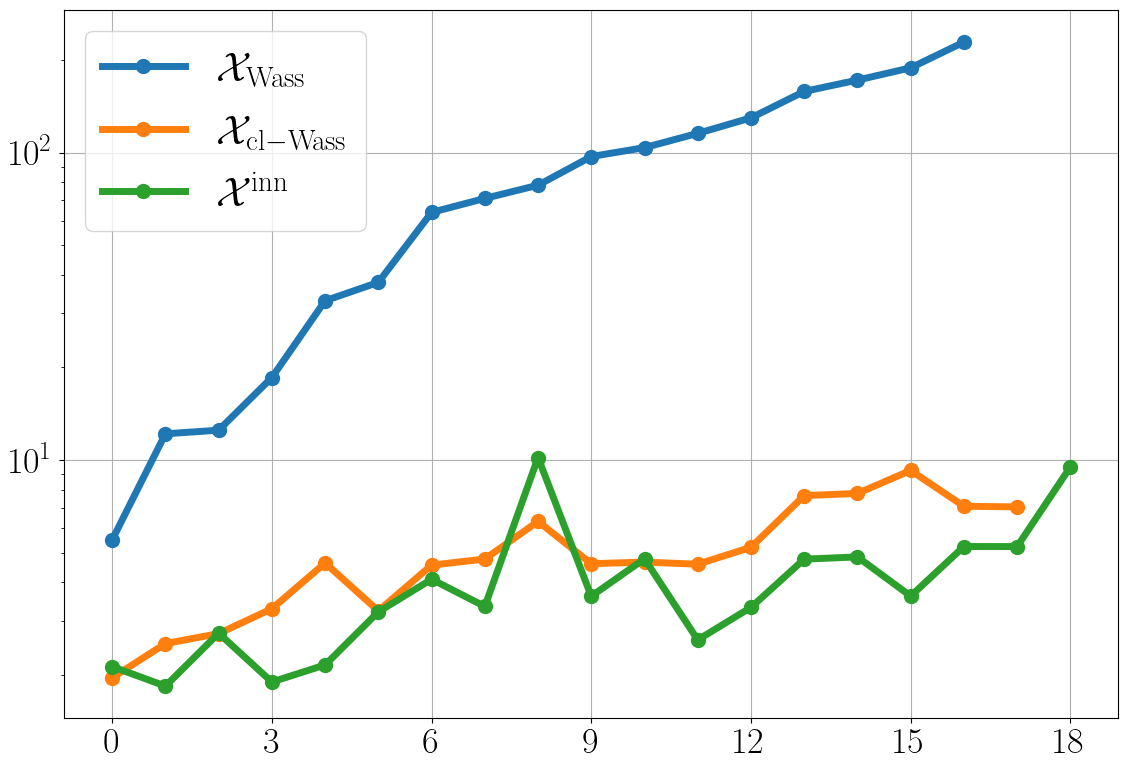}
	\caption[]%
	{{\footnotesize Steps average elapsed time (sec)}}
	\label{fig:step}
\end{subfigure}
\quad
\begin{subfigure}[b]{0.31\linewidth}   
	\centering 
	\includegraphics[width=\linewidth]{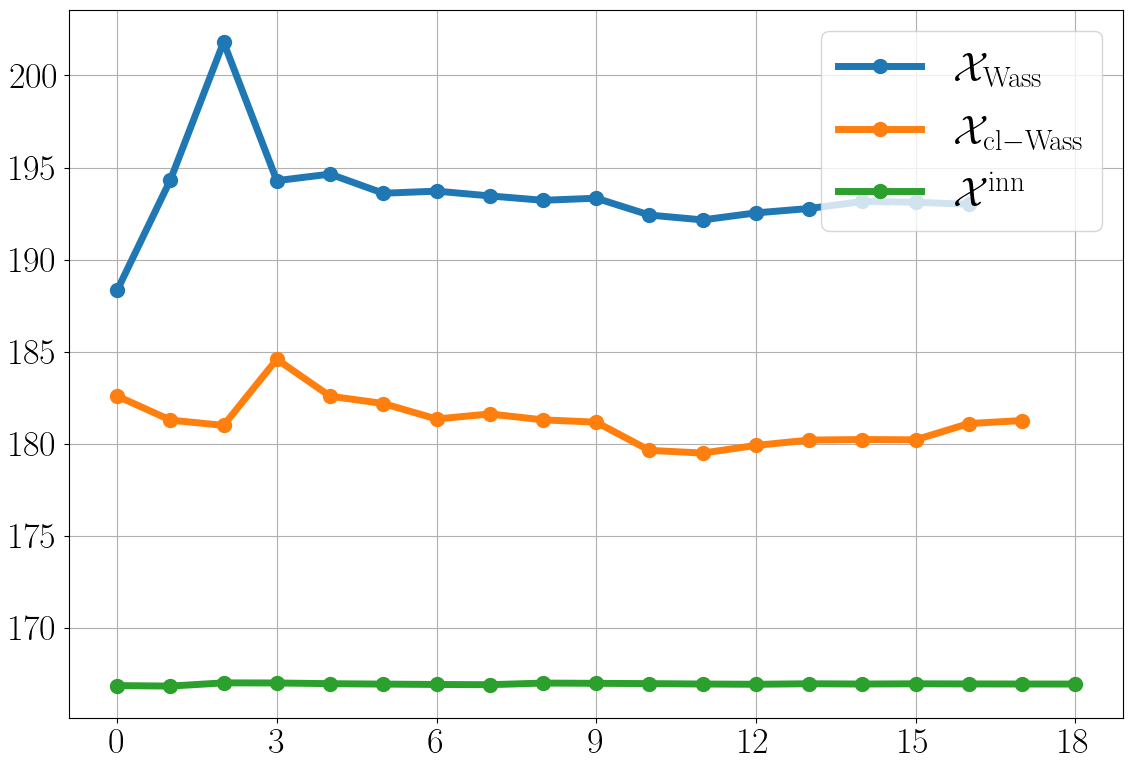}
	\caption[]%
	{{\footnotesize Cost of each iteration}}    
	\label{fig:cost}
\end{subfigure}
\caption{\footnotesize The comparison of three presented approximation approaches in Algorithm~\ref{ag:DR-iteration} in terms of the computational time and cost efficiency. Plots illustrate the elapsed time in each iteration, elapsed time per step in each iteration, and the cost of each iteration over the progress of the algorithm.} 
\vspace*{-3ex}
\label{fig:effic}
\end{figure*}

\section{Simulation}\label{sec:sims}
In this section, we consider a motion planning problem in the presence of a randomly moving obstacle to compare the performance and efficiency of the presented approaches. In this problem, a circular mobile robot is navigating in a 2-D environment. We assume that at each time step, the position of the obstacle is observable and as the iterations progress, more information is revealed. 

\subsubsection{Setup}
The deterministic linear dynamics of the mobile robot is represented as:
\begin{align*}
	x_{t+1} & = \begin{bmatrix}
		1 & 0 & 1 & 0 \\
		0 & 1 & 0 & 1 \\
		0 & 0 & 1 & 0 \\
		0 & 0 & 0 & 1 
	\end{bmatrix}x_t + \begin{bmatrix}
		0 & 0 \\
		0 & 0 \\
		1 & 0 \\
		0 & 1 \\
	\end{bmatrix}u_t,
\end{align*}
where the state vector $x_t$ consists of the position and velocity and the input vector $u_t$ represents the acceleration. At each iteration, the agent starts at $x_s = [0, 0, 0, 0]$ and aims to reach the target at $x_F = [5, 3, 0, 0]$, while a square obstacle is moving randomly with the position of its center being $o_t = [2, 1.2] + w_t$. The distribution of $w_t\in\real^2$ is the product distribution, where for each axis, the component distribution is a truncated zero-mean normal distribution with support $[-0.45, 0.45]$ and variance $\sigma\!=\!0.15$. Before starting the first iteration, the observations set is initialized with 15 i.i.d samples. The stage cost $r(x, u)$ is in quadratic form and given as ${r(x_t, u_t)=(x_F-x_t)^{\top}Q(x_F-x_t) + u_t^{\top}Ru_t}$, where ${Q=\text{diag}(1, 1, 0.01, 0.01)}$ and ${R=\text{diag}(0.01, 0.01)}$. Other parameters are specified in Table~\ref{tb:params}. The optimization problem~\eqref{eq:DR-RLMPC:main} is solved using GEKKO~\cite{LB-DH-RM-JH:2018} on a PC with an Intel Core i7-10610U 2.30-GHz processor and 16-GB RAM. For $\XWass^{j-1}$ and $\XcluWass^{j-1}$, the problem is solved using an IPOPT-based solver and for $\XXinner{j-1}$ an APOPT-based solver is used. Each algorithm is executed for 20 iterations.
\begin{table}
	\centering
	\resizebox{\columnwidth}{!}{
	\begin{tabular}{||l|l||l|l||}
		\hline
		Risk coefficient ($\beta$) & 0.05 & Distance threshold ($\dmin$) & 0.1\\
		Obstacle length  & 1 & Agent radius& 0.2\\
		Horizon length ($K$) & 11 & Ambiguity set radius ($\theta$) & $1e^{-3}$\\
		 Number of iterations & 20 & Number of clusters ($\Ncl$) & 5 \\
		\hline
	\end{tabular}
}
	\caption{Simulation parameters}
	\vspace*{-3ex}
	\label{tb:params}
\end{table}
\subsubsection*{Results}

Trajectories generated using three approximation approaches of the problem~\eqref{eq:DR-RLMPC:main} are depicted in Figure~\ref{fig:trajs}. Resulted trajectories show that considering $\XWass$ as the safety set, provide the algorithm with more freedom to explore the environment. After expanding the safe set in the first iterations, the agent tries to find an efficient trajectory from the lower side of the obstacle. On the other hand, considering safety sets $\XcluWass$ and $\XX^{\mathrm{inn}}$ does not let the agent deviate much from the initial safe set. In Figure~\ref{fig:trajs_c}, the green area that represents $\XX^{\mathrm{inn}}$ includes some areas in which the probability of the obstacle's presence is low. This shows the difference between the presented distributionally robust risk-averse set $\XX^{\mathrm{inn}}$ and a normal robustly safe set. It is notable that the conservatism of the algorithms can be tuned via $\dmin$ and $\beta$ which is not the focus of this experiment.

In Figure~\ref{fig:effic}, we compare the efficiency of the presented approaches in terms of both performance and computational effort. The first two plots show that using safety sets $\XcluWass$ and $\XX^{\mathrm{inn}}$ in problem~\eqref{eq:DR-RLMPC:main} reduces the computational time notably and their difference with the $\XWass$ increase over iterations. In addition, the average elapsed time per step in each iteration indicates that the clustering approach and the inner approximation of the feasible set are comparable in terms of real-time decision-making, however, the inner approximation approach is slightly faster. Furthermore, bilinearity of constraints in $\XWass$ and $\XcluWass$ limits the solver to be able to guarantee optimality. As a result, involving $\XX^{\mathrm{inn}}$ leads to better trajectories in terms of cost efficiency.

\section{Conclusions}
We have considered a risk-constrained optimal control problem for motion planning and explored using iterative DR MPC method as a solution strategy. Considering Wasserstein ambiguity sets in the DR MPC routine, we have formulated two approximations of the optimization problem driving the MPC. We have shown the approximations to be computationally efficient and resulting in safe trajectories. We have illustrated the strength of our methods via a numerical example. Future work includes further bringing down the computational costs by considering reachable sets and exploring distributed implementation 
for multi-robot setup.

\bibliographystyle{ieeetr}

\end{document}